\newtheorem{theorem}{Theorem}
\theoremstyle{plain}
\newtheorem{axiom}{Axiom}
\newtheorem{conjecture}{Conjecture}
\newtheorem{corollary}{Corollary}
\newtheorem{definition}{Definition}
\newtheorem{example}{Example}
\newtheorem{exercise}{Exercise}
\newtheorem{lemma}{Lemma}
\newtheorem{proposition}{Proposition}
\newtheorem{remark}{Remark}
\numberwithin{equation}{section}
\chardef\@x10\chardef\@xv60
\def\tcitime{
\def\@time{%
  \@minute\time\@hour\@minute\divide\@hour\@xv
  \ifnum\@hour<\@x 0\fi\the\@hour:%
  \multiply\@hour\@xv\advance\@minute-\@hour
  \ifnum\@minute<\@x 0\fi\the\@minute
  }}%
\def\x@hyperref#1#2#3{%
   \catcode`\~ = 12
   \catcode`\$ = 12
   \catcode`\_ = 12
   \catcode`\# = 12
   \catcode`\& = 12
   \y@hyperref{#1}{#2}{#3}%
}
\def\y@hyperref#1#2#3#4{%
   #2\ref{#4}#3
   \catcode`\~ = 13
   \catcode`\$ = 3
   \catcode`\_ = 8
   \catcode`\# = 6
   \catcode`\& = 4
}
\def\QCTOpt[#1]#2{%
  \def\QCTOptB{#1}
  \def\QCTOptA{#2}
}
\def\QCTNOpt#1{%
  \def\QCTOptA{#1}
  \let\QCTOptB\empty
}
\def\Qct{%
  \@ifnextchar[{%
    \QCTOpt}{\QCTNOpt}
}
\def\QCBOpt[#1]#2{%
  \def\QCBOptB{#1}%
  \def\QCBOptA{#2}%
}
\def\QCBNOpt#1{%
  \def\QCBOptA{#1}%
  \let\QCBOptB\empty
}
\def\Qcb{%
  \@ifnextchar[{%
    \QCBOpt}{\QCBNOpt}%
}
\def\PrepCapArgs{%
  \ifx\QCBOptA\empty
    \ifx\QCTOptA\empty
      {}%
    \else
      \ifx\QCTOptB\empty
        {\QCTOptA}%
      \else
        [\QCTOptB]{\QCTOptA}%
      \fi
    \fi
  \else
    \ifx\QCBOptA\empty
      {}%
    \else
      \ifx\QCBOptB\empty
        {\QCBOptA}%
      \else
        [\QCBOptB]{\QCBOptA}%
      \fi
    \fi
  \fi
}
\def\GRAPHICSPS#1{%
 \ifcase\GRAPHICSTYPE
   \special{ps: #1}%
 \or
   \special{language "PS", include "#1"}%
 \fi
}%
\def\graffile#1#2#3#4{%
    \bgroup
       \@inlabelfalse
       \leavevmode
       \@ifundefined{bbl@deactivate}{\def~{\string~}}{\activesoff}%
        \raise -#4 \BOXTHEFRAME{%
           \hbox to #2{\raise #3\hbox to #2{\null #1\hfil}}}%
    \egroup
}%
\def\draftbox#1#2#3#4{%
 \leavevmode\raise -#4 \hbox{%
  \frame{\rlap{\protect\tiny #1}\hbox to #2%
   {\vrule height#3 width\z@ depth\z@\hfil}%
  }%
 }%
}%
\let\nographics=\@msidraft
\newif\ifwasdraft
\def\GRAPHIC#1#2#3#4#5{%
   \ifnum\@msidraft=\@ne\draftbox{#2}{#3}{#4}{#5}%
   \else\graffile{#1}{#3}{#4}{#5}%
   \fi
}
\def\addtoLaTeXparams#1{%
    \edef\LaTeXparams{\LaTeXparams #1}}%
\newif\ifBoxFrame \BoxFramefalse
\newif\ifOverFrame \OverFramefalse
\newif\ifUnderFrame \UnderFramefalse
\def\BOXTHEFRAME#1{%
   \hbox{%
      \ifBoxFrame
         \frame{#1}%
      \else
         {#1}%
      \fi
   }%
}
\def\doFRAMEparams#1{\BoxFramefalse\OverFramefalse\UnderFramefalse\readFRAMEparams#1\end}%
\def\readFRAMEparams#1{%
 \ifx#1\end%
  \let\next=\relax
  \else
  \ifx#1i\dispkind=\z@\fi
  \ifx#1d\dispkind=\@ne\fi
  \ifx#1f\dispkind=\tw@\fi
  \ifx#1t\addtoLaTeXparams{t}\fi
  \ifx#1b\addtoLaTeXparams{b}\fi
  \ifx#1p\addtoLaTeXparams{p}\fi
  \ifx#1h\addtoLaTeXparams{h}\fi
  \ifx#1X\BoxFrametrue\fi
  \ifx#1O\OverFrametrue\fi
  \ifx#1U\UnderFrametrue\fi
  \ifx#1w
    \ifnum\@msidraft=1\wasdrafttrue\else\wasdraftfalse\fi
    \@msidraft=\@ne
  \fi
  \let\next=\readFRAMEparams
  \fi
 \next
 }%
\def\IFRAME#1#2#3#4#5#6{%
      \bgroup
      \let\QCTOptA\empty
      \let\QCTOptB\empty
      \let\QCBOptA\empty
      \let\QCBOptB\empty
      #6%
      \parindent=0pt
      \leftskip=0pt
      \rightskip=0pt
      \setbox0=\hbox{\QCBOptA}%
      \@tempdima=#1\relax
      \ifOverFrame
          \typeout{This is not implemented yet}%
          \show\HELP
      \else
         \ifdim\wd0>\@tempdima
            \advance\@tempdima by \@tempdima
            \ifdim\wd0 >\@tempdima
               \setbox1 =\vbox{%
                  \unskip\hbox to \@tempdima{\hfill\GRAPHIC{#5}{#4}{#1}{#2}{#3}\hfill}%
                  \unskip\hbox to \@tempdima{\parbox[b]{\@tempdima}{\QCBOptA}}%
               }%
               \wd1=\@tempdima
            \else
               \textwidth=\wd0
               \setbox1 =\vbox{%
                 \noindent\hbox to \wd0{\hfill\GRAPHIC{#5}{#4}{#1}{#2}{#3}\hfill}\\%
                 \noindent\hbox{\QCBOptA}%
               }%
               \wd1=\wd0
            \fi
         \else
            \ifdim\wd0>0pt
              \hsize=\@tempdima
              \setbox1=\vbox{%
                \unskip\GRAPHIC{#5}{#4}{#1}{#2}{0pt}%
                \break
                \unskip\hbox to \@tempdima{\hfill \QCBOptA\hfill}%
              }%
              \wd1=\@tempdima
           \else
              \hsize=\@tempdima
              \setbox1=\vbox{%
                \unskip\GRAPHIC{#5}{#4}{#1}{#2}{0pt}%
              }%
              \wd1=\@tempdima
           \fi
         \fi
         \@tempdimb=\ht1
         \advance\@tempdimb by -#2
         \advance\@tempdimb by #3
         \leavevmode
         \raise -\@tempdimb \hbox{\box1}%
      \fi
      \egroup%
}%
\def\DFRAME#1#2#3#4#5{%
  \hfil\break
  \bgroup
     \leftskip\@flushglue
     \rightskip\@flushglue
     \parindent\z@
     \parfillskip\z@skip
     \let\QCTOptA\empty
     \let\QCTOptB\empty
     \let\QCBOptA\empty
     \let\QCBOptB\empty
     \vbox\bgroup
        \ifOverFrame
           #5\QCTOptA\par
        \fi
        \GRAPHIC{#4}{#3}{#1}{#2}{\z@}%
        \ifUnderFrame
           \break#5\QCBOptA
        \fi
     \egroup
   \egroup
   \break
}%
\def\FFRAME#1#2#3#4#5#6#7{%
  \@ifundefined{floatstyle}
    {
     \begin{figure}[#1]%
    }
    {
     \ifx#1h
      \begin{figure}[H]%
     \else
      \begin{figure}[#1]%
     \fi
    }
  \let\QCTOptA\empty
  \let\QCTOptB\empty
  \let\QCBOptA\empty
  \let\QCBOptB\empty
  \ifOverFrame
    #4
    \ifx\QCTOptA\empty
    \else
      \ifx\QCTOptB\empty
        \caption{\QCTOptA}%
      \else
        \caption[\QCTOptB]{\QCTOptA}%
      \fi
    \fi
    \ifUnderFrame\else
      \label{#5}%
    \fi
  \else
    \UnderFrametrue%
  \fi
  \begin{center}\GRAPHIC{#7}{#6}{#2}{#3}{\z@}\end{center}%
  \ifUnderFrame
    #4
    \ifx\QCBOptA\empty
      \caption{}%
    \else
      \ifx\QCBOptB\empty
        \caption{\QCBOptA}%
      \else
        \caption[\QCBOptB]{\QCBOptA}%
      \fi
    \fi
    \label{#5}%
  \fi
  \end{figure}%
 }%
\def\makeactives{
  \catcode`\"=\active
  \catcode`\;=\active
  \catcode`\:=\active
  \catcode`\'=\active
  \catcode`\~=\active
}
   \gdef\activesoff{%
      \def"{\string"}%
      \def;{\string;}%
      \def:{\string:}%
      \def'{\string'}%
      \def~{\string~}%
    }
\def\FRAME#1#2#3#4#5#6#7#8{%
 \bgroup
 \ifnum\@msidraft=\@ne
   \wasdrafttrue
 \else
   \wasdraftfalse%
 \fi
 \def\LaTeXparams{}%
 \dispkind=\z@
 \def\LaTeXparams{}%
 \doFRAMEparams{#1}%
 \ifnum\dispkind=\z@\IFRAME{#2}{#3}{#4}{#7}{#8}{#5}\else
  \ifnum\dispkind=\@ne\DFRAME{#2}{#3}{#7}{#8}{#5}\else
   \ifnum\dispkind=\tw@
    \edef\@tempa{\noexpand\FFRAME{\LaTeXparams}}%
    \@tempa{#2}{#3}{#5}{#6}{#7}{#8}%
    \fi
   \fi
  \fi
  \ifwasdraft\@msidraft=1\else\@msidraft=0\fi{}%
  \egroup
 }%
\def\TEXUX#1{"texux"}
\long\def\QQQ#1#2{%
     \long\expandafter\def\csname#1\endcsname{#2}}%
\long\def\QQA#1#2{}%
\def\QTR#1#2{{\csname#1\endcsname {#2}}}%
\def\EXPAND#1[#2]#3{}%
\def\NOEXPAND#1[#2]#3{}%
\def\LaTeXparent#1{}%
\def\ChildStyles#1{}%
\def\ChildDefaults#1{}%
\def\QTagDef#1#2#3{}%
  \providecommand{\UNICODE}[2][]{\protect\rule{.1in}{.1in}}
  \providecommand{\U}[1]{\protect\rule{.1in}{.1in}}
\def\QQfnmark#1{\footnotemark}
 \def\abstract{%
  \if@twocolumn
   \section*{Abstract (Not appropriate in this style!)}%
   \else \small
   \begin{center}{\bf Abstract\vspace{-.5em}\vspace{\z@}}\end{center}%
   \quotation
   \fi
  }%
   \def\registered{\relax\ifmmode{}\r@gistered
                    \else$\m@th\r@gistered$\fi}%
 \def\r@gistered{^{\ooalign
  {\hfil\raise.07ex\hbox{$\scriptstyle\rm\text{R}$}\hfil\crcr
  \mathhexbox20D}}}}{}%
\newdimen\theight
\def\newfmtname{LaTeX2e}
  \DeclareOldFontCommand{\rm}{\normalfont\rmfamily}{\mathrm}
  \DeclareOldFontCommand{\sf}{\normalfont\sffamily}{\mathsf}
  \DeclareOldFontCommand{\tt}{\normalfont\ttfamily}{\mathtt}
  \DeclareOldFontCommand{\bf}{\normalfont\bfseries}{\mathbf}
  \DeclareOldFontCommand{\it}{\normalfont\itshape}{\mathit}
  \DeclareOldFontCommand{\sl}{\normalfont\slshape}{\@nomath\sl}
  \DeclareOldFontCommand{\sc}{\normalfont\scshape}{\@nomath\sc}
\def\alpha{{\Greekmath 010B}}%
\def\beta{{\Greekmath 010C}}%
\def\gamma{{\Greekmath 010D}}%
\def\delta{{\Greekmath 010E}}%
\def\epsilon{{\Greekmath 010F}}%
\def\zeta{{\Greekmath 0110}}%
\def\eta{{\Greekmath 0111}}%
\def\theta{{\Greekmath 0112}}%
\def\iota{{\Greekmath 0113}}%
\def\kappa{{\Greekmath 0114}}%
\def\lambda{{\Greekmath 0115}}%
\def\mu{{\Greekmath 0116}}%
\def\nu{{\Greekmath 0117}}%
\def\xi{{\Greekmath 0118}}%
\def\pi{{\Greekmath 0119}}%
\def\rho{{\Greekmath 011A}}%
\def\sigma{{\Greekmath 011B}}%
\def\tau{{\Greekmath 011C}}%
\def\upsilon{{\Greekmath 011D}}%
\def\phi{{\Greekmath 011E}}%
\def\chi{{\Greekmath 011F}}%
\def\psi{{\Greekmath 0120}}%
\def\omega{{\Greekmath 0121}}%
\def\varepsilon{{\Greekmath 0122}}%
\def\vartheta{{\Greekmath 0123}}%
\def\varpi{{\Greekmath 0124}}%
\def\varrho{{\Greekmath 0125}}%
\def\varsigma{{\Greekmath 0126}}%
\def\varphi{{\Greekmath 0127}}%
\def\nabla{{\Greekmath 0272}}
\def\FindBoldGroup{%
   {\setbox0=\hbox{$\mathbf{x\global\edef\theboldgroup{\the\mathgroup}}$}}%
}
\def\Greekmath#1#2#3#4{%
    \if@compatibility
        \ifnum\mathgroup=\symbold
           \mathchoice{\mbox{\boldmath$\displaystyle\mathchar"#1#2#3#4$}}%
                      {\mbox{\boldmath$\textstyle\mathchar"#1#2#3#4$}}%
                      {\mbox{\boldmath$\scriptstyle\mathchar"#1#2#3#4$}}%
                      {\mbox{\boldmath$\scriptscriptstyle\mathchar"#1#2#3#4$}}%
        \else
           \mathchar"#1#2#3#4%
        \fi
    \else
        \FindBoldGroup
        \ifnum\mathgroup=\theboldgroup 
           \mathchoice{\mbox{\boldmath$\displaystyle\mathchar"#1#2#3#4$}}%
                      {\mbox{\boldmath$\textstyle\mathchar"#1#2#3#4$}}%
                      {\mbox{\boldmath$\scriptstyle\mathchar"#1#2#3#4$}}%
                      {\mbox{\boldmath$\scriptscriptstyle\mathchar"#1#2#3#4$}}%
        \else
           \mathchar"#1#2#3#4%
        \fi
      \fi}
\newif\ifGreekBold  \GreekBoldfalse
\let\SAVEPBF=\pbf
\def\pbf{\GreekBoldtrue\SAVEPBF}%
  \newcounter{equationnumber}
  \def\mathletters{%
     \addtocounter{equation}{1}
     \edef\@currentlabel{\theequation}%
     \setcounter{equationnumber}{\c@equation}
     \setcounter{equation}{0}%
     \edef\theequation{\@currentlabel\noexpand\alph{equation}}%
  }
    \def\BibTeX{{\rm B\kern-.05em{\sc i\kern-.025em b}\kern-.08em
                 T\kern-.1667em\lower.7ex\hbox{E}\kern-.125emX}}}{}%
\def\AmS{{\protect\usefont{OMS}{cmsy}{m}{n}%
                A\kern-.1667em\lower.5ex\hbox{M}\kern-.125emS}}}{}%
\def\@@eqncr{\let\@tempa\relax
    \ifcase\@eqcnt \def\@tempa{& & &}\or \def\@tempa{& &}%
      \else \def\@tempa{&}\fi
     \@tempa
     \if@eqnsw
        \iftag@
           \@taggnum
        \else
           \@eqnnum\stepcounter{equation}%
        \fi
     \fi
     \global\tag@false
     \global\@eqnswtrue
     \global\@eqcnt\z@\cr}
\def\TCItag{\@ifnextchar*{\@TCItagstar}{\@TCItag}}
\def\@TCItag#1{%
    \global\tag@true
    \global\def\@taggnum{(#1)}}
\def\@TCItagstar*#1{%
    \global\tag@true
    \global\def\@taggnum{#1}}
\def\ExitTCILatex{\makeatother }
\let\DOTSI\relax
\def\RIfM@{\relax\ifmmode}%
\def\FN@{\futurelet\next}%
\def\iint{\DOTSI\intno@\tw@\FN@\ints@}%
\def\iiint{\DOTSI\intno@\thr@@\FN@\ints@}%
\def\iiiint{\DOTSI\intno@4 \FN@\ints@}%
\def\idotsint{\DOTSI\intno@\z@\FN@\ints@}%
\def\ints@{\findlimits@\ints@@}%
\newif\iflimtoken@
\newif\iflimits@
\def\findlimits@{\limtoken@true\ifx\next\limits\limits@true
 \else\ifx\next\nolimits\limits@false\else
 \limtoken@false\ifx\ilimits@\nolimits\limits@false\else
 \ifinner\limits@false\else\limits@true\fi\fi\fi\fi}%
\def\multint@{\int\ifnum\intno@=\z@\intdots@                          
 \else\intkern@\fi                                                    
 \ifnum\intno@>\tw@\int\intkern@\fi                                   
 \ifnum\intno@>\thr@@\int\intkern@\fi                                 
 \int}
\def\multintlimits@{\intop\ifnum\intno@=\z@\intdots@\else\intkern@\fi
 \ifnum\intno@>\tw@\intop\intkern@\fi
 \ifnum\intno@>\thr@@\intop\intkern@\fi\intop}%
\def\intic@{%
    \mathchoice{\hskip.5em}{\hskip.4em}{\hskip.4em}{\hskip.4em}}%
\def\negintic@{\mathchoice
 {\hskip-.5em}{\hskip-.4em}{\hskip-.4em}{\hskip-.4em}}%
\def\ints@@{\iflimtoken@                                              
 \def\ints@@@{\iflimits@\negintic@
   \mathop{\intic@\multintlimits@}\limits                             
  \else\multint@\nolimits\fi                                          
  \eat@}
 \else                                                                
 \def\ints@@@{\iflimits@\negintic@
  \mathop{\intic@\multintlimits@}\limits\else
  \multint@\nolimits\fi}\fi\ints@@@}%
\def\intkern@{\mathchoice{\!\!\!}{\!\!}{\!\!}{\!\!}}%
\def\plaincdots@{\mathinner{\cdotp\cdotp\cdotp}}%
\def\intdots@{\mathchoice{\plaincdots@}%
 {{\cdotp}\mkern1.5mu{\cdotp}\mkern1.5mu{\cdotp}}%
 {{\cdotp}\mkern1mu{\cdotp}\mkern1mu{\cdotp}}%
 {{\cdotp}\mkern1mu{\cdotp}\mkern1mu{\cdotp}}}%
\def\RIfM@{\relax\protect\ifmmode}
\def\text{\RIfM@\expandafter\text@\else\expandafter\mbox\fi}
\let\nfss@text\text
\def\text@#1{\mathchoice
   {\textdef@\displaystyle\f@size{#1}}%
   {\textdef@\textstyle\tf@size{\firstchoice@false #1}}%
   {\textdef@\textstyle\sf@size{\firstchoice@false #1}}%
   {\textdef@\textstyle \ssf@size{\firstchoice@false #1}}%
   \glb@settings}
\def\textdef@#1#2#3{\hbox{{%
                    \everymath{#1}%
                    \let\f@size#2\selectfont
                    #3}}}
\newif\iffirstchoice@
\def\Let@{\relax\iffalse{\fi\let\\=\cr\iffalse}\fi}%
\def\vspace@{\def\vspace##1{\crcr\noalign{\vskip##1\relax}}}%
\def\multilimits@{\bgroup\vspace@\Let@
 \baselineskip\fontdimen10 \scriptfont\tw@
 \advance\baselineskip\fontdimen12 \scriptfont\tw@
 \lineskip\thr@@\fontdimen8 \scriptfont\thr@@
 \lineskiplimit\lineskip
 \vbox\bgroup\ialign\bgroup\hfil$\m@th\scriptstyle{##}$\hfil\crcr}%
\def\Sb{_\multilimits@}%
\def\endSb{\crcr\egroup\egroup\egroup}%
\def\Sp{^\multilimits@}%
\newdimen\ex@
\def\rightarrowfill@#1{$#1\m@th\mathord-\mkern-6mu\cleaders
 \hbox{$#1\mkern-2mu\mathord-\mkern-2mu$}\hfill
 \mkern-6mu\mathord\rightarrow$}%
\def\leftarrowfill@#1{$#1\m@th\mathord\leftarrow\mkern-6mu\cleaders
 \hbox{$#1\mkern-2mu\mathord-\mkern-2mu$}\hfill\mkern-6mu\mathord-$}%
\def\leftrightarrowfill@#1{$#1\m@th\mathord\leftarrow
\mkern-6mu\cleaders
 \hbox{$#1\mkern-2mu\mathord-\mkern-2mu$}\hfill
 \mkern-6mu\mathord\rightarrow$}%
\def\overrightarrow{\mathpalette\overrightarrow@}%
\def\overrightarrow@#1#2{\vbox{\ialign{##\crcr\rightarrowfill@#1\crcr
 \noalign{\kern-\ex@\nointerlineskip}$\m@th\hfil#1#2\hfil$\crcr}}}%
\def\overleftarrow{\mathpalette\overleftarrow@}%
\def\overleftarrow@#1#2{\vbox{\ialign{##\crcr\leftarrowfill@#1\crcr
 \noalign{\kern-\ex@\nointerlineskip}$\m@th\hfil#1#2\hfil$\crcr}}}%
\def\overleftrightarrow{\mathpalette\overleftrightarrow@}%
\def\overleftrightarrow@#1#2{\vbox{\ialign{##\crcr
   \leftrightarrowfill@#1\crcr
 \noalign{\kern-\ex@\nointerlineskip}$\m@th\hfil#1#2\hfil$\crcr}}}%
\def\underrightarrow{\mathpalette\underrightarrow@}%
\def\underrightarrow@#1#2{\vtop{\ialign{##\crcr$\m@th\hfil#1#2\hfil
  $\crcr\noalign{\nointerlineskip}\rightarrowfill@#1\crcr}}}%
\def\underleftarrow{\mathpalette\underleftarrow@}%
\def\underleftarrow@#1#2{\vtop{\ialign{##\crcr$\m@th\hfil#1#2\hfil
  $\crcr\noalign{\nointerlineskip}\leftarrowfill@#1\crcr}}}%
\def\underleftrightarrow{\mathpalette\underleftrightarrow@}%
\def\underleftrightarrow@#1#2{\vtop{\ialign{##\crcr$\m@th
  \hfil#1#2\hfil$\crcr
 \noalign{\nointerlineskip}\leftrightarrowfill@#1\crcr}}}%
\def\qopnamewl@#1{\mathop{\operator@font#1}\nlimits@}
\let\nlimits@\displaylimits
\def\setboxz@h{\setbox\z@\hbox}
\def\varlim@#1#2{\mathop{\vtop{\ialign{##\crcr
 \hfil$#1\m@th\operator@font lim$\hfil\crcr
 \noalign{\nointerlineskip}#2#1\crcr
 \noalign{\nointerlineskip\kern-\ex@}\crcr}}}}
 \def\rightarrowfill@#1{\m@th\setboxz@h{$#1-$}\ht\z@\z@
  $#1\copy\z@\mkern-6mu\cleaders
  \hbox{$#1\mkern-2mu\box\z@\mkern-2mu$}\hfill
  \mkern-6mu\mathord\rightarrow$}
\def\leftarrowfill@#1{\m@th\setboxz@h{$#1-$}\ht\z@\z@
  $#1\mathord\leftarrow\mkern-6mu\cleaders
  \hbox{$#1\mkern-2mu\copy\z@\mkern-2mu$}\hfill
  \mkern-6mu\box\z@$}
\def\projlim{\qopnamewl@{proj\,lim}}
\def\injlim{\qopnamewl@{inj\,lim}}
\def\varinjlim{\mathpalette\varlim@\rightarrowfill@}
\def\varprojlim{\mathpalette\varlim@\leftarrowfill@}
\def\varliminf{\mathpalette\varliminf@{}}
\def\varliminf@#1{\mathop{\underline{\vrule\@depth.2\ex@\@width\z@
   \hbox{$#1\m@th\operator@font lim$}}}}
\def\varlimsup{\mathpalette\varlimsup@{}}
\def\varlimsup@#1{\mathop{\overline
  {\hbox{$#1\m@th\operator@font lim$}}}}
\def\align{\@verbatim \frenchspacing\@vobeyspaces \@alignverbatim
You are using the "align" environment in a style in which it is not defined.}
\let\csname endalign*\endcsname =\endtrivlist
\def\alignat{\@verbatim \frenchspacing\@vobeyspaces \@alignatverbatim
You are using the "alignat" environment in a style in which it is not defined.}
\let\csname endalignat*\endcsname =\endtrivlist
\def\xalignat{\@verbatim \frenchspacing\@vobeyspaces \@xalignatverbatim
You are using the "xalignat" environment in a style in which it is not defined.}
\let\csname endxalignat*\endcsname =\endtrivlist
\def\gather{\@verbatim \frenchspacing\@vobeyspaces \@gatherverbatim
You are using the "gather" environment in a style in which it is not defined.}
\let\csname endgather*\endcsname =\endtrivlist
\def\multiline{\@verbatim \frenchspacing\@vobeyspaces \@multilineverbatim
You are using the "multiline" environment in a style in which it is not defined.}
\let\csname endmultiline*\endcsname =\endtrivlist
\def\arrax{\@verbatim \frenchspacing\@vobeyspaces \@arraxverbatim
You are using a type of "array" construct that is only allowed in AmS-LaTeX.}
\def\tabulax{\@verbatim \frenchspacing\@vobeyspaces \@tabulaxverbatim
You are using a type of "tabular" construct that is only allowed in AmS-LaTeX.}
\let\csname endarrax*\endcsname =\endtrivlist
\let\csname endtabulax*\endcsname =\endtrivlist
 \def\endequation{%
     \ifmmode\ifinner 
      \iftag@
        \addtocounter{equation}{-1} 
        $\hfil
           \displaywidth\linewidth\@taggnum\egroup \endtrivlist
        \global\tag@false
        \global\@ignoretrue
      \else
        $\hfil
           \displaywidth\linewidth\@eqnnum\egroup \endtrivlist
        \global\tag@false
        \global\@ignoretrue
      \fi
     \else
      \iftag@
        \addtocounter{equation}{-1} 
        \eqno \hbox{\@taggnum}
        \global\tag@false%
        $$\global\@ignoretrue
      \else
        \eqno \hbox{\@eqnnum}
        $$\global\@ignoretrue
      \fi
     \fi\fi
 }
 \newif\iftag@ \tag@false
 \def\TCItag{\@ifnextchar*{\@TCItagstar}{\@TCItag}}
 \def\@TCItag#1{%
     \global\tag@true
     \global\def\@taggnum{(#1)}}
 \def\@TCItagstar*#1{%
     \global\tag@true
     \global\def\@taggnum{#1}}
     \def\tag{\@ifnextchar*{\@tagstar}{\@tag}}
     \def\@tag#1{%
         \global\tag@true
         \global\def\@taggnum{(#1)}}
     \def\@tagstar*#1{%
         \global\tag@true
         \global\def\@taggnum{#1}}
\begin{document}
\def\cprime{$'$}
\def\cprime{$'$}

\title{Parabolic comparison revisited and applications}
\author{Joscha Diehl, Peter K. Friz and Harald Oberhauser}
\address{JD and HO are affiliated to TU Berlin. PKF is corresponding author
(friz@math.tu-berlin.de) and affiliated to TU and WIAS Berlin.}

\begin{abstract}
We consider the Cauchy-Dirichlet problem%
\begin{equation*}
\partial _{t}u-F\left( t,x,u,Du,D^{2}u\right) =0\text{ on }(0,T)\times 
\mathbb{R}^{n}
\end{equation*}%
in viscosity sense. Comparison is established for bounded semi-continuous
(sub-/super-)solutions under structural assumption (3.14) of the User's
Guide plus a mild condition on $F$ such as to cope with the unbounded
domain. Comparison on $(0,T]$, space-time regularity and existence are also
discussed. Our analysis passes through an extension of the parabolic theorem
of sums which appears to be useful in its own right.
\end{abstract}

\keywords{parabolic viscosity PDEs, theorem of sums, regularity of viscosity
solutions}
\maketitle

\section{Introduction}

We recall some basic ideas of (second order) viscosity theory (Crandall,
Ishii,\ Lions ... \cite{MR1118699UserGuide, MR2179357FS}). Consider a
real-valued function $u=u\left( x\right) $ with $x\in \mathbb{R}^{n}$ and
assume $u\in C^{2}$ is a classical supersolution,%
\begin{equation*}
-G\left( x,u,Du,D^{2}u\right) \geq 0,
\end{equation*}%
where $G$ is a (continuous) function, \textit{degenerate elliptic} in the
sense that $G\left( x,u,p,A\right) \leq G\left( x,u,p,A+B\right) $ whenever $%
B\geq 0$ in the sense of symmetric matrices, one also requires that $G$ is
non-increasing in $u$; under these assumptions $G$ is called \textit{proper}%
. The idea is to consider a (smooth) test function $\varphi $ which touches $%
u$ from below at some point $\bar{x}$. Basic calculus implies that $Du\left( 
\bar{x}\right) =D\varphi \left( \bar{x}\right) ,\,D^{2}u\left( \bar{x}%
\right) \geq D^{2}\varphi \left( \bar{x}\right) $ and, from degenerate
ellipticity,%
\begin{equation}
-G\left( \bar{x},\varphi ,D\varphi ,D^{2}\varphi \right) \,\geq 0.
\label{Gxbar}
\end{equation}%
This suggests to define a \textit{viscosity subsolution} (at the point $\bar{%
x}$) to $-G=0$ as a (upper semi-)continuous function $u$ with the property
that (\ref{Gxbar}) holds for any test function which touches $u$ from above
at $\bar{x}$. Similarly, \textit{viscosity supersolutions} are (lower
semi-)continuous functions, defined via testfunctions touching $u$ from
below and by reversing the inequality in (\ref{Gxbar}); \textit{viscosity
solutions} are both super- and subsolutions (and hence continuous).

Observe that this definition covers (completely degenerate) first order
equations as well as parabolic equations, e.g. by considering $\partial
_{t}-F=0$ where $F$ is proper. The resulting theory (existence, uniqueness,
stability, ...) is without doubt one of most important recent developments
in the field of partial differential equations. In particular, much is known
about the Cauchy-Dirichlet problem%
\begin{equation}
\partial _{t}u-F\left( t,x,u,Du,D^{2}u\right) =0\text{ on }\left( 0,T\right)
\times \Omega  \label{partialu_minus_Fu_Intro}
\end{equation}%
with (nice) initial data, say $u_{0}\in C\left( \Omega \right) $, on some 
\textit{bounded} domain\ $\Omega $; see e.g. Theorem 8.2 in the User's Guide 
\cite{MR1118699UserGuide}. Under structural assumptions on $F$ there is
existence and uniqueness (in some class). In fact, uniqueness follows from a
stronger property known as \textit{comparison:} assume $u$ (resp. $v$) is
are semicontinuous sub- (resp. super) solution and $u_{0}\leq v_{0}$; then $%
u\leq v$ on $\left( 0,T\right) \times \Omega $.

Surprisingly perhaps, much less has been written about the Cauchy-Dirichlet
problem on unbounded domains. This seems to be particularly unfortunate
since much of the recent applications from stochastics are naturally on
unbounded domains\footnote{%
Leaving aside standard examples from stochastic control, let us mention
2BSDEs \cite{CSTV} and stochastic viscosity theory \cite{MR1959710,
MR1647162, MR1799099, MR1807189}; a related rough path point \cite{lyons-98,
lyons-qian-02, friz-victoir-book} was introduced in \cite{CFO} and also
relies on viscosity methods.}. Let us be specific.

(i) We are unaware of a precise result that gives the simplest set of
addtional structural assumptions on $F$ such as to generalize the
aforementioned Theorem 8.2. to, say, bounded solutions on $\left( 0,T\right)
\times \mathbb{R}^{n}$.\newline
(ii) Comparison should be valid up to time $T$; after all $T\times \mathbb{R}%
^{n}$ is not part of the parabolic boundary.\newline
(iii) When does bounded uniformly continuous initial data, $u_{0}\in \mathrm{%
BUC}\left( \mathbb{R}^{n}\right) $, lead to a modulus of continuity of $%
u\left( t,\cdot \right) $, uniformly in $t\in \left[ 0,T\right] $\thinspace ?%
\newline
(iv) When do we have a space-time modulus or, say, a solution $u\in \mathrm{%
BUC}\left( [0,T]\times \mathbb{R}^{n}\right) $\thinspace ?

There are partial answers to these things in the literature of course. Let
us mention in particular \cite{MR1119185} (towards (i) and (iii)) and \cite%
{CGG91} (and the references therein\footnote{%
The authors also point out various mistakes in previous papers in this
context.}) concerning (ii). In the first order case, much can be found in
the books \cite{MR1613876, MR1484411}.

The \textbf{contribution of this paper} is to provide such results (with
fully detailed proofs) in the generality of (\ref{partialu_minus_Fu_Intro}).
While some "general ideas" are without doubt part of the folklore of the
subject (e.g. "spatial modulus follows from comparison", "time modulus
follows from spatial modulus") their proper implementation is far from
trivial. In particular, we were led to an extension of the parabolic theorem
of sums which seems to be quite useful in its own right. To elaborate on
this point, recall that almost every modern treatise of second order
comparison relies in one way or another on the \textit{theorem of sums}
(TOS), also known as \textit{Crandall-Ishii lemma} \cite{MR1073054}. A
parabolic version of the TOS on $\left( 0,T\right) \times \Omega $ then
underlies most second order (parabolic) comparison results; such as those in 
\cite[Chapter 8]{MR1118699UserGuide} or \cite[Chapter 5]{MR2179357FS}. As is
well-known, its application requires a barrier at time $T$; e.g. replace a
subsolution $u$ by $u^{\gamma }:=$ $u-\gamma /\left( T-t\right) $ or so,
followed by $\gamma \downarrow 0$ in the end. In many application this
simple tricks works perfectly fine; sometimes, however, it makes life
difficult. For instance, if $u$ is assumed to be bounded, the same is not
true for $u^{\gamma }$ (altough it is bounded from above); consequently one
may have to introduce various localizations of the non-linearity to deal
with the resulting unboundedness. (An example of the resulting complication
is seen in \cite{DF}.) Concerning the present paper, establishing a spatial
modulus of solutions with the (standard) form of the parabolic theorem of
sums would have led to a (apriori)\ dependence of the spatial modulus in
time; establishing the (desired) uniformity in $t\in \left[ 0,T\right] $,
cf. (iii) above, then entails a painstaking checking of uniformity in $%
\gamma $ for all double limits in the technical lemma \ref%
{LemmaPenalityOffDiag2} below. All these difficulties can be avoided by our
extension of the (parabolic) TOS which remains valid for $t=T$. Perhaps,
from a "general point of view", this is not surprising (after all, the
elliptic TOS\ holds in great generality for locally compact domains and the
parabolic TOS, in a sense, just discards unwanted second order information
related to the $t$ variable) but then, here again, a proper implementation
with full details is quite involved.

\bigskip \textbf{Acknowledgement:}

J. Diehl is supported by an IRTG (Berlin-Zurich) PhD-scholarship; P. Friz
and H. Oberhauser received funding from the European Research Council under
the European Union's Seventh Framework Programme (FP7/2007-2013) / ERC grant
agreement nr. 258237. P. Friz would like to thank G. Barles for a very
helpful email exchange and M. Soner for kindly looking over a earlier
version of this note.

\section{Structural conditions on $F$\label{StructAssF}}

Let $F=F\left( t,x,u,p,X\right) :\left[ 0,T\right] \times \mathbb{R}%
^{n}\times \mathbb{R\times R}^{n}\times S^{n}\rightarrow \mathbb{R}$ be
continuous and degenerate elliptic i.e. non-decreasing in $X$. Assume also
that there exists $\gamma $ such that, uniformly in $t,x,p,X$,%
\begin{equation*}
\gamma \left( u-v\right) \leq F\left( t,x,v,p,X\right) -F\left(
t,x,u,p,X\right) \text{ whenever }v\leq u\text{.}
\end{equation*}%
When $\gamma \geq 0$ such $F$s are called \textit{proper}. Since we will be
interested in parabolic problems of the form $\partial _{t}-F$ a suitable
change of variable ($u\leftrightarrow e^{\gamma t}u$) shows that $\gamma <0$
does not cause trouble. Assume furthermore that there exists, for all $R>0$,
a function $\theta _{R}:\left[ 0,\infty \right] \rightarrow \left[ 0,\infty %
\right] $ with $\theta_R \left( 0+\right) =0$, such that 
\begin{equation}
F\left( t,x,r,\alpha \left( x-\tilde{x}\right) ,X\right) -F\left( t,\tilde{x}%
,r,\alpha \left( x-\tilde{x}\right) ,Y\right) \leq \theta _{R}\left( \alpha
\left\vert x-\tilde{x}\right\vert ^{2}+\left\vert x-\tilde{x}\right\vert
\right)  \label{UG314_1}
\end{equation}%
for all $t\in \left[ 0,T\right] ,\,x,\tilde{x}\in \mathbb{R}^{n},\,r\in %
\left[ -R,R\right] ,$\thinspace $\alpha >0$ and $X,Y\in S^{n}$ (the space of 
$n\times n$ symmetric matrices) which satisfy%
\begin{equation}
-3\alpha 
\begin{pmatrix}
I & 0 \\ 
0 & I%
\end{pmatrix}%
\leq 
\begin{pmatrix}
X & 0 \\ 
0 & -Y%
\end{pmatrix}%
\leq 3\alpha 
\begin{pmatrix}
I & -I \\ 
-I & I%
\end{pmatrix}%
\text{.}  \label{MatrixInequality}
\end{equation}

Under these conditions, comparison for the Cauchy-Dirichlet problem $%
\partial _{t}-F=0$ on $\left( 0,T\right) \times \Omega $, with $\Omega $
bounded, holds (User's Guide, chapter 8). We shall be interested in
comparison for bounded (semi-continuous, sub- and super-) solutions on$%
\,(0,T]\times $ $\mathbb{R}^{n}$. In particular, the unboundedness of $%
\mathbb{R}^{n}$ will require the following additional assumption: assume $%
F=F\left( t,x,u,p,X\right) $ is \textit{uniformly continuous} (\textrm{UC})
whenever $u,p,X$ remain bounded; i.e.%
\begin{equation}
\forall R>0:F|_{\left[ 0,T\right] \times \mathbb{R}^{n}\times \left[ -R,R%
\right] \times B_{R}\times M_{R}}\text{ is uniformly continuous}
\label{UCassumption}
\end{equation}%
where $B_{R},M_{R}$ denote (open)\ balls of radius $R$ in $\mathbb{R}^{n},$%
\thinspace $S^{n}$ respectively.

\section{Parabolic comparison - statement of theorem}

\begin{theorem}
\label{ThmMain}Assume $F$ satisfies the assumptions of section \ref%
{StructAssF}. Consider $u\in \mathrm{bUSC}\left( [0,T)\times \mathbb{R}%
^{n}\right) ,$ $v\in \mathrm{bLSC}\left( [0,T)\times \mathbb{R}^{n}\right) $%
, extended to $\left[ 0,T\right] \times \mathbb{R}^{n}$ via their
semi-continuous envelopes; that is,%
\begin{equation*}
u\left( T,x\right) =\limsup_{\substack{ \left( t,y\right) \in \lbrack
0,T)\times \mathbb{R}^{n}:  \\ t\rightarrow T,y\rightarrow x}}u\left(
t,y\right) ,\,\,\,v\left( T,x\right) =\liminf_{\substack{ \left( t,y\right)
\in \lbrack 0,T)\times \mathbb{R}^{n}:  \\ t\rightarrow T,y\rightarrow x}}%
v\left( t,y\right) .
\end{equation*}%
Assume that, in the sense of parabolic viscosity sub- and super-solutions%
\footnote{%
As is well-known, the precise meaning of (\ref{SubSuperAssumption}) is
expressed (equivalently) in terms of "touching" test-functions or in term of
sub- and super-jets. We shall switch between these points without further
comments.}%
\begin{equation}
\partial _{t}u-F\left( t,x,u,Du,D^{2}u\right) \leq 0\leq \partial
_{t}v-F\left( t,x,v,Dv,D^{2}v\right) \text{ \ on }(0,T)\times \mathbb{R}^{n}.
\label{SubSuperAssumption}
\end{equation}%
Then the following statements hold true.\newline
(i)\ The validity of (\ref{SubSuperAssumption}) extends to $Q:=(0,T]\times 
\mathbb{R}^{n}$ (which reflects that $\left\{ T\right\} \times \mathbb{R}%
^{n} $ is not part of the parabolic boundary of $Q$).\newline
(ii) If $F$ satisfies the structural condition of the previous section, $%
u_{0}:=u\left( 0,\right) ,\,\,v_{0}:=v\left( 0,\right) \in \mathrm{BUC}%
\left( \mathbb{R}^{n}\right) $ and 
\begin{equation*}
u_{0}\leq v_{0}\text{ \ on }\mathbb{R}^{n}
\end{equation*}%
one has the key estimate, valid for all $\left( t,x,y\right) \in \lbrack
0,T]\times \mathbb{R}^{n}\times \mathbb{R}^{n}$,%
\begin{equation*}
u\left( t,x\right) -v\left( t,y\right) \leq \inf_{\alpha }\left[ \frac{%
\alpha }{2}\left\vert x-y\right\vert ^{2}+l\left( \alpha \right) \right]
\end{equation*}%
where $l\left( \alpha \right) $ tends to $0$ as $\alpha \uparrow \infty $,
uniformly in $t\in \left[ 0,T\right] $.
\end{theorem}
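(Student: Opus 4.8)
The plan is to prove (i) and (ii) together, with (i) serving as the crucial enabling step for (ii). For part (i), the standard trick would be to replace $u$ by $u^{\gamma}(t,x)=u(t,x)-\gamma/(T-t)$, check the subsolution property on $(0,T)$, and let $\gamma\downarrow 0$; but since we want to keep boundedness (so that the structural assumptions, which are only uniform on bounded $u$-ranges, remain applicable) and since we want the endpoint $t=T$ inside the domain, I would instead invoke the extended parabolic theorem of sums announced in the introduction (the one valid for $t=T$). Concretely, I would take test functions touching $u$ (resp. $v$) from above (resp. below) at a point $(\bar t,\bar x)$ with $\bar t\le T$, and argue that the parabolic subjet/superjet at $\bar t=T$ already carries the correct inequality because the envelope definition of $u(T,\cdot)$, $v(T,\cdot)$ forces any such touching to be approximable by touchings at times $t<T$ where $(\ref{SubSuperAssumption})$ holds; continuity of $F$ then passes the inequality to the limit. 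This gives $(\ref{SubSuperAssumption})$ on $Q=(0,T]\times\mathbb{R}^{n}$.

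For part (ii), I would run the classical doubling-of-variables argument, but on the \emph{closed} time interval $[0,T]$, which is legitimate precisely because of (i). Fix $\alpha>0$ and $\varepsilon>0$ and consider
\begin{equation*}
\Phi_{\alpha,\varepsilon}(t,x,y)=u(t,x)-v(t,y)-\frac{\alpha}{2}|x-y|^{2}-\varepsilon\bigl(\langle x\rangle+\langle y\rangle\bigr),
\end{equation*}
where $\langle x\rangle=(1+|x|^{2})^{1/2}$ is a mild growth penalization forcing a maximum to be attained (recall $u,v$ are bounded); one checks the penalization terms contribute negligibly as $\varepsilon\downarrow 0$ and, being smooth with bounded derivatives, do not disturb the structure conditions. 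At an interior maximum point $(\hat t,\hat x,\hat y)$ with $\hat t>0$, apply the parabolic theorem of sums (in its endpoint-valid form from (i)) to obtain $b\in\mathbb{R}$ and $X,Y\in S^{n}$ with $b-b=0$ in the parabolic slot and $X,Y$ satisfying $(\ref{MatrixInequality})$ with the doubling parameter $\alpha$; feed these into $(\ref{SubSuperAssumption})$, subtract, use properness (the $\gamma(u-v)$ term has a favorable sign after the $e^{\gamma t}$ reduction) and the key structural bound $(\ref{UG314_1})$ to get
\begin{equation*}
0\le \theta_{R}\bigl(\alpha|\hat x-\hat y|^{2}+|\hat x-\hat y|\bigr)+o_{\varepsilon}(1).
\end{equation*}
If instead $\hat t=0$, then the initial condition $u_{0}\le v_{0}$ on $\mathbb{R}^{n}$ together with $u_{0},v_{0}\in\mathrm{BUC}$ bounds $\Phi_{\alpha,\varepsilon}$ by the modulus of $u_{0}-v_{0}$ evaluated at $|\hat x-\hat y|$, which is itself controlled (again via a standard argument) by a quantity going to $0$ as $\alpha\uparrow\infty$.

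The remaining analytic point is the control of $|\hat x-\hat y|$: from $\Phi_{\alpha,\varepsilon}(\hat t,\hat x,\hat y)\ge\Phi_{\alpha,\varepsilon}(\hat t,\hat x,\hat x)$ one gets $\frac{\alpha}{2}|\hat x-\hat y|^{2}\le u(\hat t,\hat x)-u(\hat t,\hat x)+v(\hat t,\hat x)-v(\hat t,\hat y)+\varepsilon(\cdots)\le 2\|u\|_{\infty}+2\|v\|_{\infty}+O(\varepsilon)$, hence $|\hat x-\hat y|\le C\alpha^{-1/2}$, and moreover the finer argument (using BUC of $u_{0},v_{0}$ and the subsolution property near $t=0$, cf. the technical lemma \ref{LemmaPenalityOffDiag2} referenced in the excerpt) upgrades this to $\alpha|\hat x-\hat y|^{2}\to 0$ as $\alpha\uparrow\infty$. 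Then $\theta_{R}$ and the initial modulus both vanish, so defining $l(\alpha):=\sup_{t\in[0,T]}\bigl(\text{the resulting bound at parameter }\alpha\bigr)$ gives, after $\varepsilon\downarrow 0$,
\begin{equation*}
u(t,x)-v(t,y)\le\frac{\alpha}{2}|x-y|^{2}+l(\alpha)\qquad\text{for all }(t,x,y)\in[0,T]\times\mathbb{R}^{n}\times\mathbb{R}^{n},
\end{equation*}
with $l(\alpha)\to 0$ as $\alpha\uparrow\infty$ and, by construction, uniformly in $t\in[0,T]$; taking the infimum over $\alpha$ finishes the proof. The main obstacle is making the endpoint case $\hat t=T$ harmless — i.e. genuinely having the theorem of sums available at $t=T$ so that one never needs the $\gamma/(T-t)$ barrier — and, secondarily, checking that every double limit ($\varepsilon\downarrow 0$ before or after $\alpha\uparrow\infty$, and the limits hidden in Lemma \ref{LemmaPenalityOffDiag2}) is genuinely uniform in $t$; the boundedness of $u,v$ and the $\mathrm{UC}$ assumption $(\ref{UCassumption})$ are exactly what is needed to push these through on the unbounded domain $\mathbb{R}^{n}$.
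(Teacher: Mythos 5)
Your plan for part (ii) is broadly in line with the paper's argument (doubling with a vanishing spatial penalty, extended parabolic theorem of sums at a possibly terminal maximum time, the $\hat t=0$ case handled by the modulus of $u_{0}-v_{0}$, and a technical lemma controlling $\alpha|\hat x-\hat y|^{2}$ and $\varepsilon|\hat x|$ in the right iterated-limit order). Two small inaccuracies there: (a) the extended theorem of sums at $\hat t=T$ gives only $b_{1}+b_{2}\ge b$, not the equality your "$b-b=0$" phrasing suggests — the inequality is exactly what is used; (b) you write that the endpoint-valid theorem of sums comes "from (i)", but (i) and the extended theorem of sums are two \emph{distinct} results. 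Part (i) upgrades the viscosity inequality for $u$ and for $v$ separately to $t=T$; the extended theorem of sums (Theorem~\ref{ThmTOSrevisited}, Appendix~2) is a two-function jet-splitting statement. The paper's proof of (ii) needs \emph{both}: the extended theorem of sums to produce jets at $(\hat t,\hat x)$, $(\hat t,\hat y)$ when $\hat t=T$, and part (i) to be allowed to plug those jets into the viscosity inequalities.

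The genuine gap is in your sketch of (i). You claim that because $u(T,\cdot)$ is defined as an upper semicontinuous envelope, "any such touching \emph{to be approximable} by touchings at times $t<T$", and that continuity then passes the inequality to the limit. That implication does not hold on its own: the envelope definition hands you a sequence $(t^{n},x^{n})\to(T,\bar x)$ with $u(t^{n},x^{n})\to u(T,\bar x)$, but those points are not maximum points of $u-\phi$, so nothing can be said about $\phi$ there. One needs an actual mechanism to manufacture interior maximum points converging to $(T,\bar x)$. The paper's device is to set $\alpha_{n}:=T-t^{n}$ and maximize $u-\phi-\alpha_{n}^{2}/(T-t)$; the key point is the \emph{tuned scale} $\alpha_{n}^{2}$ relative to $\alpha_{n}=T-t^{n}$, so that evaluating the barrier at $(t^{n},x^{n})$ yields $\alpha_{n}^{2}/(T-t^{n})=\alpha_{n}\to 0$, forcing the maximizers $(t_{n},x_{n})$ of the penalized function to stay in the interior in $t$ yet still converge to $(T,\bar x)$ with $u(t_{n},x_{n})\to u(T,\bar x)$. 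Properness of $F$ in $u$ (with $\gamma\ge 0$) is then invoked to replace $u(t_{n},x_{n})$ by $u(t^{n},x^{n})+o(1)$ in the viscosity inequality before passing to the limit. Without this (or an equivalent) barrier construction, your outline of (i) does not close. Also note that invoking the extended theorem of sums for (i), as you alternatively suggest, is a mismatch: that theorem concerns a sum $u_{1}+u_{2}$ and splitting a joint jet, whereas (i) is a single-function statement about the validity of the subsolution/supersolution inequality at the terminal time; the paper's (i) is proved independently and in fact uses nothing but properness of $F$.

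One place where you actually deviate in a way that arguably simplifies things: you penalize with $\varepsilon(\langle x\rangle+\langle y\rangle)$ rather than the paper's $\varepsilon(|x|^{2}+|y|^{2})$. Since $D\langle x\rangle$ and $D^{2}\langle x\rangle$ are uniformly bounded, the $\varepsilon$-perturbations of the gradient and Hessian entering $F$ are automatically $O(\varepsilon)$ without needing the $\varepsilon|\hat x|\to 0$ estimate from Lemma~\ref{LemmaPenalityOffDiag2}(b); in exchange one still has to verify that the other conclusions of that lemma (control of $\alpha|\hat x-\hat y|^{2}$ and of $\varepsilon(\langle\hat x\rangle+\langle\hat y\rangle)$, in the iterated limit $\varepsilon\ll 1/\alpha\to 0$) go through with the milder penalty, which they do by the same argument.
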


\begin{remark}
Since $\tilde{u}\left( t,x\right) =e^{-\gamma t}u\left( t,x\right) $ [resp. $%
\,\tilde{v}\left( t,x\right) =e^{-\gamma t}v\left( t,x\right) $ ] is a sub-
[resp. super-]solution to $\left( \partial _{t}-\tilde{F}\right) \tilde{u}%
+\gamma \tilde{u}=0$ with 
\begin{equation*}
\tilde{F}\left( t,x,p,X\right) =e^{-\gamma t}F\left( t,x,e^{\gamma t}\tilde{u%
},e^{\gamma t}D\tilde{u},e^{\gamma t}D^{2}\tilde{u}\right)
\end{equation*}%
we can always reduce to the case that $\gamma >0$. In particular, we shall
give the proof under this assumption.
\end{remark}

\begin{remark}
The key estimate implies immediately comparison (take $x=y$)%
\begin{equation*}
u\leq v\text{ on }[0,T]\times \mathbb{R}^{n}\text{.}
\end{equation*}%
By a $2\epsilon $ argument, it also yields a spatial modulus for any
solution $u$; uniform in $t\in \left[ 0,T\right] $. Indeed, for fixed $t\leq
T$ pick $\alpha $ large enough so that $l\left( \alpha \right) <\epsilon /2$%
; for any $x,y:\left\vert x-y\right\vert $ small enough (only depending on $%
\alpha $ and hence $\epsilon $) we have $u\left( t,x\right) -u\left(
t,y\right) <\epsilon $. By switching the roles of $x$ and $y$, if necessary,
we see $\left\vert u\left( t,x\right) -u\left( t,y\right) \right\vert
<\epsilon $.
\end{remark}

\section{Parabolic Comparison: Proof of (i)}

Assume $u\in \mathrm{bUSC}\left( [0,T)\times \mathbb{R}^{n}\right) $ solves $%
\partial _{t}u-F\left( t,x,u,Du,Du\right) \leq 0$ with "properness" $\gamma
\geq 0$; with initial data $u\left( 0,\cdot \right) $ on $\left( 0,T\right)
\times \mathbb{R}^{n}$. Extend $u$ to $\mathrm{bUSC}\left( [0,T]\times 
\mathbb{R}^{n}\right) $ by setting%
\begin{equation*}
u\left( T,x\right) =\lim \sup_{t\uparrow T,y\rightarrow x}u\left( t,y\right)
\end{equation*}%
Assume $u-\phi $ has a (strict) max at $\left( T,\bar{x}\right) $, relative
to $\left[ 0,T\right] \times \mathbb{R}^{n}$. (The test function $\phi $ is
defined in an open neighbourhood of $\left[ 0,T\right] \times \mathbb{R}^{n}$%
.) Claim that%
\begin{equation*}
\partial _{t}\phi \left( T,\bar{x}\right) -F\left( T,\bar{x},u\left( T,\bar{x%
}\right) ,D\phi \left( T,\bar{x}\right) ,D^{2}\phi \left( T,\bar{x}\right)
\right) \leq 0.
\end{equation*}

\textbf{Proof: } Take $\left( t^{n},x^{n}\right) \in \left( 0,T\right)
\times \mathbb{R}^{n}$ s.t. $\left( t^{n},x^{n}\right) \rightarrow \left( T,%
\bar{x}\right) $ and $u\left( t^{n},x^{n}\right) \rightarrow u\left( T,\bar{x%
}\right) $. Set $\alpha _{n}:=T-t^{n}\downarrow 0$. Then take%
\begin{equation*}
\left( t_{n},x_{n}\right) \in \arg \max \left( u-\phi -\frac{\alpha _{n}^{2}%
}{T-t}\right) \equiv \arg \max \psi _{n}\text{.}
\end{equation*}%
over $[0,T]\times \mathbb{R}^{n}$. In order to guarantee that the sequence $%
\left( t_{n},x_{n}\right) \in \lbrack 0,T)\times \mathbb{R}^{n}$ remains in
a compact, say $[T/2,T]\times \bar{B}_{1}(\bar{x})$, we make the assumption
(without loss of generality) that $\phi (T,\bar{x})=0$ and $\phi
(t,x)>3|u|_{\infty }$ for $(t,x)\notin \lbrack T/2,T]\times \bar{B}_{1}(\bar{%
x})$; this implies $\left( t_{n},x_{n}\right) \in \lbrack T/2,T]\times \bar{B%
}_{1}(\bar{x})$ for $n$ large enough, as desired. By compactness, $\left(
t_{n},x_{n}\right) \rightarrow \left( \tilde{t},\tilde{x}\right) $ at least
along a subsequence $n\left( k\right) $. We shall run through the other
sequence $\left( t^{n},x^{n}\right) $ along the same subsequence and relabel
both to keep the same notation. Note $\psi _{n}\left( t_{n},x_{n}\right) $
is non-decreasing and bounded, hence%
\begin{equation*}
\psi _{n}\left( t_{n},x_{n}\right) \rightarrow l.
\end{equation*}%
Since $\psi _{n}\left( t_{n},x_{n}\right) \leq \left( u-\phi \right) \left(
t_{n},x_{n}\right) $ it follows (using USC of $u-\phi $) that%
\begin{equation*}
l\leq \left( u-\phi \right) \left( \tilde{t},\tilde{x}\right)
\end{equation*}%
On the other hand,%
\begin{equation*}
\psi _{n}\left( t_{n},x_{n}\right) \geq \psi _{n}\left( t^{n},x^{n}\right)
=\left( u-\phi \right) \left( t^{n},x^{n}\right) -\underset{=\alpha _{n}}{%
\underbrace{\frac{\alpha _{n}^{2}}{T-t^{n}}}}
\end{equation*}%
and hence $l\geq \left( u-\phi \right) \left( T,\bar{x}\right) $. Since $%
\left( T,\bar{x}\right) $ was a strict maximum point for $u-\phi $ conclude
that $\left( \tilde{t},\tilde{x}\right) =\left( T,\bar{x}\right) $ is the
common limit of the sequences $\left( t^{n},x^{n}\right) ,\left(
t_{n},x_{n}\right) $. Now we note that%
\begin{equation*}
\left( u-\phi \right) \left( t_{n},x_{n}\right) \geq \psi _{n}\left(
t_{n},x_{n}\right) \geq \left( u-\phi \right) \left( t^{n},x^{n}\right)
-\alpha _{n}
\end{equation*}%
which implies that ($o\left( 1\right) \rightarrow 0$ as $n\rightarrow \infty 
$)%
\begin{equation*}
u\left( t_{n},x_{n}\right) \geq u\left( t^{n},x^{n}\right) +o\left( 1\right)
\end{equation*}%
By definition of a subsolution,%
\begin{equation*}
\partial _{t}\phi \left( t_{n},x_{n}\right) -F\left( t_{n},x_{n},u\left(
t_{n},x_{n}\right) ,D\phi \left( t_{n},x_{n}\right) ,D^{2}\phi \left(
t_{n},x_{n}\right) \right) \leq 0
\end{equation*}%
and hence, using properness of $F=F\left( u\right) $, omitting the other
arguments, "with $u=u\left( t_{n},x_{n}\right) $ and $v=u\left(
t^{n},x^{n}\right) +o\left( 1\right) $"; 
\begin{eqnarray*}
-F(u\left( t_{n},x_{n}\right) ) &\geq &-F\left( u\left( t^{n},x^{n}\right)
+o\left( 1\right) \right) +\gamma \left( u\left( t_{n},x_{n}\right) -\left(
u\left( t^{n},x^{n}\right) +o\left( 1\right) \right) \right) \\
&\geq &-F\left( u\left( t^{n},x^{n}\right) \right) +o\left( 1\right) ,
\end{eqnarray*}%
also using uniform continuity of $F$ as function of $u$ over compacts, we
obtain%
\begin{equation*}
\partial _{t}\phi \left( t_{n},x_{n}\right) -F\left( t_{n},x_{n},u\left(
t^{n},x^{n}\right) ,D\phi \left( t_{n},x_{n}\right) ,D^{2}\phi \left(
t_{n},x_{n}\right) \right) \leq o\left( 1\right) .
\end{equation*}%
Sending $n\rightarrow \infty $ yields (use continuity of $\phi $ and $F$)%
\begin{equation*}
\partial _{t}\phi \left( T,\bar{x}\right) -F\left( T,\bar{x},u\left( T,\bar{x%
}\right) ,D\phi \left( T,\bar{x}\right) ,D^{2}\phi \left( T,\bar{x}\right)
\right) \leq 0,
\end{equation*}%
as desired.

\section{Parabolic Comparison: Proof of (ii)}

\begin{proof}
By assumption, $u\left( t,x\right) -v\left( t,y\right) $ is bounded on $%
[0,T]\times \mathbb{R}^{n}\times \mathbb{R}^{n}$ . Let $\left( \hat{t},\hat{x%
},\hat{y}\right) $ be a maximum point of%
\begin{equation}
\phi \left( t,x,y\right) :=u\left( t,x\right) -v\left( t,y\right) -\frac{%
\alpha }{2}\left\vert x-y\right\vert ^{2}-\varepsilon \left( \left\vert
x\right\vert ^{2}+\left\vert y\right\vert ^{2}\right)  \label{phiDef}
\end{equation}%
over $[0,T]\times \mathbb{R}^{n}\times \mathbb{R}^{n}$ where $\alpha >0$ and 
$\varepsilon >0$; such a maximum exists since $\phi \in $ $\mathrm{USC}%
\left( [0,T]\times \mathbb{R}^{n}\times \mathbb{R}^{n}\right) $ and $\phi
\rightarrow -\infty $ as $\left\vert x\right\vert ,\left\vert y\right\vert
\rightarrow \infty $. (The presence $\varepsilon >0$ amounts to a barrier at 
$\infty $ in space ). The plan is to show a "key estimate" of the form%
\newline
\begin{equation}
u\left( t,x\right) -v\left( t,y\right) \leq \inf_{\alpha }\left[ \frac{%
\alpha }{2}\left\vert x-y\right\vert ^{2}+l\left( \alpha \right) \right] ,
\label{KeyEstimate1}
\end{equation}%
valid on $[0,T]\times \mathbb{R}^{n}\times \mathbb{R}^{n}$, where $l\left(
\alpha \right) $ tends to $0$ as $\alpha \uparrow \infty $. Thanks to the
very definition of $\left( \hat{t},\hat{x},\hat{y}\right) $ as $\arg \max $
of $\phi \left( t,x,y\right) =u\left( t,x\right) -v\left( t,y\right) -\frac{%
\alpha }{2}\left\vert x-y\right\vert ^{2}-\varepsilon \left( \left\vert
x\right\vert ^{2}+\left\vert y\right\vert ^{2}\right) $, we obtain the
estimate%
\begin{equation*}
u\left( t,x\right) -v\left( t,y\right) \leq \frac{\alpha }{2}\left\vert
x-y\right\vert ^{2}+\varepsilon \left( \left\vert x\right\vert
^{2}+\left\vert y\right\vert ^{2}\right) +\phi \left( \hat{t},\hat{x},\hat{y}%
\right) .
\end{equation*}%
Note that $\left( \hat{t},\hat{x},\hat{y}\right) $ depends on $\alpha
,\varepsilon $. We shall consider the cases $\hat{t}=0$ and $\hat{t}\in
(0,T] $ separately. In the first case $\hat{t}=0$ we have%
\begin{equation*}
\phi \left( 0,\hat{x},\hat{y}\right) =\sup_{x,y}\left[ u_{0}\left( x\right)
-v_{0}\left( y\right) -\frac{\alpha }{2}\left\vert x-y\right\vert
^{2}-\varepsilon \left( \left\vert x\right\vert ^{2}+\left\vert y\right\vert
^{2}\right) \right] =:A_{\alpha ,\varepsilon }
\end{equation*}%
and lemma \ref{LemmaPenalityOffDiag1} below asserts that $A_{\alpha
,\varepsilon }\rightarrow \sup_{x}\left[ u_{0}\left( x\right) -v_{0}\left(
x\right) \right] \leq 0$ as $\left( \varepsilon ,\alpha \right) \rightarrow
\left( 0,\infty \right) $. The second case is $\hat{t}\in (0,T)$ and we will
show%
\begin{equation}
\phi \left( \hat{t},\hat{x},\hat{y}\right) \leq B_{\alpha ,\varepsilon }%
\text{ where }\left( \text{\ }\limsup_{\varepsilon \rightarrow 0}B_{\alpha
,\varepsilon }\right) \rightarrow 0\text{ as }\alpha \rightarrow \infty ;
\label{PhiLTBae}
\end{equation}%
it is here that we will use theorem of sums and viscosity properites. (Since%
\begin{equation*}
\phi \left( \hat{t},\hat{x},\hat{y}\right) \leq u\left( \hat{t},\hat{x}%
\right) -v\left( \hat{t},\hat{y}\right)
\end{equation*}%
we can and will use the fact that it is enough to consider the case $u\left( 
\hat{t},\hat{x}\right) -v\left( \hat{t},\hat{y}\right) \geq 0$.) Leaving the
details of this to below, let us quickly complete the argument: our
discussion of the two cases above gives $\phi \left( \hat{t},\hat{x},\hat{y}%
\right) \leq A_{\alpha ,\varepsilon }\vee B_{\alpha ,\varepsilon }$ and hence%
\begin{equation*}
u\left( t,x\right) -v\left( t,y\right) \leq \frac{\alpha }{2}\left\vert
x-y\right\vert ^{2}+\varepsilon \left( \left\vert x\right\vert
^{2}+\left\vert y\right\vert ^{2}\right) +A_{\alpha ,\varepsilon }\vee
B_{\alpha ,\varepsilon };
\end{equation*}%
we emphasize that this estimate is valid for all $t,x,y\in \lbrack
0,T]\times \mathbb{R}^{n}\times \mathbb{R}^{n}$ and $\alpha ,\varepsilon >0$%
. Take now $\limsup_{\varepsilon \rightarrow 0}$ on the right hand side,
then optimize over $\alpha >0$, to obtain the key estimate%
\begin{equation*}
u\left( t,x\right) -v\left( t,y\right) \leq \inf_{\alpha }\left\{ \frac{%
\alpha }{2}\left\vert x-y\right\vert ^{2}+l\left( \alpha \right) \right\}
\end{equation*}%
where we may take%
\begin{equation*}
l\left( \alpha \right) :=\limsup_{\varepsilon \rightarrow 0}A_{\alpha
,\varepsilon }\vee \limsup_{\varepsilon \rightarrow 0}B_{\alpha ,\varepsilon
},
\end{equation*}%
noting that $l\left( \alpha \right) $ indeed tends to $0$ as $\alpha
\rightarrow \infty $. It remains to prove the estimate (\ref{PhiLTBae}).$\,$%
To this end, rewrite $\phi $ as 
\begin{equation*}
\phi \left( t,x,y\right) =u^{\varepsilon }\left( t,x\right) -v^{\varepsilon
}\left( t,y\right) -\frac{\alpha }{2}\left\vert x-y\right\vert ^{2}
\end{equation*}%
where $u^{\varepsilon }\left( t,x\right) =u\left( t,x\right) -\varepsilon
\left\vert x\right\vert ^{2}$ and $v^{\varepsilon }\left( t,y\right)
=v\left( t,y\right) +\varepsilon \left\vert y\right\vert ^{2}$. Since $%
u^{\varepsilon }$ (resp. $v^{\varepsilon }$) are upper (resp. lower)
semi-continuous we can apply the (parabolic) theorem of sums as given in the
appendix at $\left( \hat{t},\hat{x},\hat{y}\right) \in (0,T]\times \mathbb{R}%
^{n}\times \mathbb{R}^{n}$ to learn that there are numbers $a,b$ and $X,Y\in
S^{n}$ such that%
\begin{equation}
\left( a,\alpha \left( \hat{x}-\hat{y}\right) ,X\right) \in \mathcal{\bar{P}}%
^{2,+}u^{\varepsilon }\left( \hat{t},\hat{x}\right) ,\,\,\,\left( b,\alpha
\left( \hat{x}-\hat{y}\right) ,Y\right) \in \mathcal{\bar{P}}%
^{2,-}v^{\varepsilon }\left( \hat{t},\hat{y}\right)  \label{SemijetsForUVeps}
\end{equation}%
such that $a-b\geq 0$ (equality if $\hat{t}\in (0,T)$, although this does
not matter), and such that one has the two-sided matrix estimate (\ref%
{MatrixInequality}). It is easy to see (cf. \cite[Remark 2.7]%
{MR1118699UserGuide}) that (\ref{SemijetsForUVeps}) is equivalent to%
\begin{eqnarray*}
\text{ }\left( a,\alpha \left( \hat{x}-\hat{y}\right) +2\varepsilon \hat{x}%
,X+2\varepsilon I\right) &\in &\mathcal{\bar{P}}^{2,+}u\left( \hat{t},\hat{x}%
\right) , \\
\left( b,\alpha \left( \hat{x}-\hat{y}\right) -2\varepsilon \hat{y}%
,Y-2\varepsilon I\right) &\in &\mathcal{\bar{P}}^{2,-}v\left( \hat{t},\hat{y}%
\right) .
\end{eqnarray*}%
Using the viscosity sub- and super-solution properties (and part (i) in the
case that $\hat{t}=T$) we then see that%
\begin{eqnarray*}
a-F\left( \hat{t},\hat{x},u\left( \hat{t},\hat{x}\right) ,\alpha \left( \hat{%
x}-\hat{y}\right) +2\varepsilon \hat{x},X+2\varepsilon I\right) &\leq &0, \\
b-F\left( \hat{t},\hat{y},v\left( \hat{t},\hat{y}\right) ,\alpha \left( \hat{%
x}-\hat{y}\right) -2\varepsilon \hat{y},Y-2\varepsilon I\right) &\geq &0.
\end{eqnarray*}%
Note that (using $a-b\geq 0$)%
\begin{equation}
F\left( \hat{t},\hat{y},v\left( \hat{t},\hat{y}\right) ,\alpha \left( \hat{x}%
-\hat{y}\right) -2\varepsilon \hat{y},Y-2\varepsilon I\right) -F\left( \hat{t%
},\hat{x},u\left( \hat{t},\hat{x}\right) ,\alpha \left( \hat{x}-\hat{y}%
\right) +2\varepsilon \hat{x},X+2\varepsilon I\right) \leq 0
\label{FromViscPropLEZero}
\end{equation}%
Trivially, (recall it is enough to consider the case $u\left( \hat{t},\hat{x}%
\right) \geq v\left( \hat{t},\hat{y}\right) $) 
\begin{eqnarray*}
\gamma \phi \left( \hat{t},\hat{x},\hat{y}\right) &\leq &\gamma \left(
u\left( \hat{t},\hat{x}\right) -v\left( \hat{t},\hat{y}\right) \right) \\
&\leq &F\left( \hat{t},\hat{y},v\left( \hat{t},\hat{y}\right) ,\alpha \left( 
\hat{x}-\hat{y}\right) -2\varepsilon \hat{y},Y-2\varepsilon I\right) \\
&&-F\left( \hat{t},\hat{y},u\left( \hat{t},\hat{x}\right) ,\alpha \left( 
\hat{x}-\hat{y}\right) -2\varepsilon \hat{y},Y-2\varepsilon I\right) \\
&\leq &F\left( \hat{t},\hat{y},v\left( \hat{t},\hat{y}\right) ,\alpha \left( 
\hat{x}-\hat{y}\right) -2\varepsilon \hat{y},Y-2\varepsilon I\right) \\
&&-F\left( \hat{t},\hat{x},u\left( \hat{t},\hat{x}\right) ,\alpha \left( 
\hat{x}-\hat{y}\right) +2\varepsilon \hat{x},X+2\varepsilon I\right) \\
&&+F\left( \hat{t},\hat{x},u\left( \hat{t},\hat{x}\right) ,\alpha \left( 
\hat{x}-\hat{y}\right) +2\varepsilon \hat{x},X+2\varepsilon I\right) \\
&&-F\left( \hat{t},\hat{y},u\left( \hat{t},\hat{x}\right) ,\alpha \left( 
\hat{x}-\hat{y}\right) -2\varepsilon \hat{y},Y-2\varepsilon I\right) \\
&\leq &F\left( \hat{t},\hat{x},u\left( \hat{t},\hat{x}\right) ,\alpha \left( 
\hat{x}-\hat{y}\right) +2\varepsilon \hat{x},X+2\varepsilon I\right) \\
&&-F\left( \hat{t},\hat{y},u\left( \hat{t},\hat{x}\right) ,\alpha \left( 
\hat{x}-\hat{y}\right) -2\varepsilon \hat{y},Y-2\varepsilon I\right)
\end{eqnarray*}%
where we used (\ref{FromViscPropLEZero}) in the last estimate. If $%
\varepsilon $ were absent (e.g. set $\varepsilon =0$ throughout) we would
estimate, with $R:=|u|_{\infty }\vee |v|_{\infty }$, 
\begin{equation*}
F\left( \hat{t},\hat{x},u\left( \hat{t},\hat{x}\right) ,\alpha \left( \hat{x}%
-\hat{y}\right) ,X\right) -F\left( \hat{t},\hat{y},u\left( \hat{t},\hat{x}%
\right) ,\alpha \left( \hat{x}-\hat{y}\right) ,Y\right) \leq \theta
_{R}\left( \alpha \left\vert \hat{x}-\hat{y}\right\vert ^{2}+\left\vert \hat{%
x}-\hat{y}\right\vert \right) =:B_{\alpha }
\end{equation*}%
and since $\alpha \left\vert \hat{x}-\hat{y}\right\vert ^{2}+\left\vert \hat{%
x}-\hat{y}\right\vert \leq 2\alpha \left\vert \hat{x}-\hat{y}\right\vert
^{2}+1/\alpha \rightarrow 0$ as $\alpha \rightarrow \infty $, thanks to \cite%
[lemma 3.1]{MR1118699UserGuide}, we see that $B_{\alpha }\rightarrow 0$ with 
$\alpha \rightarrow \infty $, which is enough to conclude. The present case
where $\varepsilon >0$ is essentially reduced to the case $\varepsilon =0$
by adding/subtracting%
\begin{equation*}
F\left( \hat{t},\hat{x},u\left( \hat{t},\hat{x}\right) ,\alpha \left( \hat{x}%
-\hat{y}\right) ,X\right) -F\left( \hat{t},\hat{y},u\left( \hat{t},\hat{x}%
\right) ,\alpha \left( \hat{x}-\hat{y}\right) ,Y\right) ,
\end{equation*}%
but we need some refined properties of $\left( \hat{t},\hat{x},\hat{y}%
\right) $ as collected in lemma \ref{LemmaPenalityOffDiag2}: (a) $p=\alpha
\left( \hat{x}-\hat{y}\right) $ remains, for fixed $\alpha $, bounded as $%
\varepsilon \rightarrow 0$, (b) $2\varepsilon \left\vert \hat{x}\right\vert $
and $2\varepsilon \left\vert \hat{y}\right\vert $ tend to zero as as $%
\varepsilon \rightarrow 0$ for fixed (large enough) $\alpha $; this follows
from the fact, that for $\alpha $ large enough we must have $%
\limsup_{\varepsilon \rightarrow 0}\varepsilon |\hat{x}|^{2}=c_{\alpha
}<\infty $ (after all, $c_{\alpha }$ tends to zero with $\alpha \rightarrow
\infty $) and by rewriting $\limsup_{\varepsilon \rightarrow 0}\varepsilon |%
\hat{x}|\leq \sqrt{c_{\alpha }}\limsup_{\varepsilon \rightarrow 0}\sqrt{%
\varepsilon }=0$, (c) that $\limsup_{\varepsilon \rightarrow 0}\left( \frac{%
\alpha }{2}\left\vert \hat{x}-\hat{y}\right\vert ^{2}+\left\vert \hat{x}-%
\hat{y}\right\vert \right) \rightarrow 0$ as $\alpha \rightarrow \infty $.
We also note that (\ref{MatrixInequality}) implies (d): any matrix norm of $%
X,Y$ is bounded by a constant times $\alpha $, independent of $\varepsilon $%
. We can now return to the estimate of $\phi $ and clearly have%
\begin{equation*}
\phi \left( \hat{t},\hat{x},\hat{y}\right) \leq \frac{1}{\gamma }\left[
\left( i\right) +\left( ii\right) +\left( ii\right) \right] =:B_{\alpha
,\varepsilon }
\end{equation*}%
where 
\begin{eqnarray*}
\left( i\right) &=&\left\vert F\left( \hat{t},\hat{x},u\left( \hat{t},\hat{x}%
\right) ,\alpha \left( \hat{x}-\hat{y}\right) +2\varepsilon \hat{x}%
,X+2\varepsilon I\right) -F\left( \hat{t},\hat{x},u\left( \hat{t},\hat{x}%
\right) ,\alpha \left( \hat{x}-\hat{y}\right) ,X\right) \right\vert \\
\left( ii\right) &=&\left\vert F\left( \hat{t},\hat{y},u\left( \hat{t},\hat{x%
}\right) ,\alpha \left( \hat{x}-\hat{y}\right) -2\varepsilon \hat{y}%
,Y-2\varepsilon I\right) -F\left( \hat{t},\hat{y},u\left( \hat{t},\hat{x}%
\right) ,\alpha \left( \hat{x}-\hat{y}\right) ,Y\right) \right\vert \\
\left( iii\right) &=&\theta _{R}\left( \alpha \left\vert \hat{x}-\hat{y}%
\right\vert ^{2}+\left\vert \hat{x}-\hat{y}\right\vert \right) \text{.}
\end{eqnarray*}%
From (a),(d) above the gradient and Hessian argument in $F$ as seen in $%
\left( i\right) ,\left( ii\right) $, i.e.%
\begin{equation*}
\alpha \left( \hat{x}-\hat{y}\right) \pm 2\varepsilon \hat{x}\text{ and }%
X+2\varepsilon I,Y-2\varepsilon I,
\end{equation*}%
remain in a bounded set, for fixed $\alpha $, uniformly as $\varepsilon
\rightarrow 0$. From (b) above and the assumed uniform continuity properties
of $F$, it then follows that for fixed (large enough) $\alpha $ 
\begin{equation*}
\left( i\right) ,\left( ii\right) \rightarrow 0\text{ as }\varepsilon
\rightarrow 0.
\end{equation*}%
On the other hand, continuity of $\theta _{R}$ at $0+$ together with (c)
above shows that also $\left( iii\right) \rightarrow 0$ as $\varepsilon <<%
\frac{1}{\alpha }\,\rightarrow 0$. We conclude that%
\begin{equation*}
B_{\alpha ,\varepsilon }\rightarrow 0\text{ as }\varepsilon <<\frac{1}{%
\alpha }\,\rightarrow 0\text{,}
\end{equation*}%
which implies (\ref{PhiLTBae}), as desired. The proof is now finished.
\end{proof}

\begin{lemma}[{\protect\cite[Lemme 2.9]{MR1613876}}]
\bigskip \label{LemmaPenalityOffDiag1}Assume $u_{0},v_{0}\in \mathrm{BUC}%
\left( \mathbb{R}^{n}\right) $. Then%
\begin{equation*}
\sup_{x,y}\left[ u_{0}\left( x\right) -v_{0}\left( y\right) -\frac{\alpha }{2%
}\left\vert x-y\right\vert ^{2}-\varepsilon \left( \left\vert x\right\vert
^{2}+\left\vert y\right\vert ^{2}\right) \right] \rightarrow \sup_{x}\left[
u_{0}\left( x\right) -v_{0}\left( x\right) \right] \text{ as }(\varepsilon ,%
\frac{1}{\alpha })\rightarrow \left( 0,0\right) \text{ .}
\end{equation*}
\end{lemma}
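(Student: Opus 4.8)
The plan is to show that $M_{\alpha,\varepsilon}:=\sup_{x,y}\big[u_{0}(x)-v_{0}(y)-\frac{\alpha}{2}|x-y|^{2}-\varepsilon(|x|^{2}+|y|^{2})\big]$ converges to $M:=\sup_{x}[u_{0}(x)-v_{0}(x)]$ by establishing the two matching inequalities $\liminf M_{\alpha,\varepsilon}\ge M$ and $\limsup M_{\alpha,\varepsilon}\le M$, the limits being taken as $(\varepsilon,1/\alpha)\to(0,0)$. This is the classical doubling/penalization argument.

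For the lower bound I would restrict the supremum to the diagonal $x=y$, which immediately gives $M_{\alpha,\varepsilon}\ge\sup_{x}[u_{0}(x)-v_{0}(x)-2\varepsilon|x|^{2}]$ for every $\alpha$. Then, given $\delta>0$, choosing $x_{0}$ with $u_{0}(x_{0})-v_{0}(x_{0})>M-\delta$ yields $M_{\alpha,\varepsilon}\ge M-\delta-2\varepsilon|x_{0}|^{2}$, whence $\liminf M_{\alpha,\varepsilon}\ge M-\delta$ along any sequence with $\varepsilon\to0$ (regardless of $\alpha$), and letting $\delta\downarrow0$ closes this direction.

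For the upper bound, boundedness and continuity of $u_{0},v_{0}$ together with the coercive $\varepsilon$-terms guarantee that the supremum defining $M_{\alpha,\varepsilon}$ is attained at some $(\hat x,\hat y)$ (depending on $\alpha,\varepsilon$), since the functional tends to $-\infty$ as $|x|+|y|\to\infty$. Testing the functional at $x=y=0$ gives $M_{\alpha,\varepsilon}\ge u_{0}(0)-v_{0}(0)$, hence
\[ \frac{\alpha}{2}|\hat x-\hat y|^{2}+\varepsilon(|\hat x|^{2}+|\hat y|^{2})\le u_{0}(\hat x)-v_{0}(\hat y)-u_{0}(0)+v_{0}(0)\le 2R, \]
with $R:=|u_{0}|_{\infty}+|v_{0}|_{\infty}$, so in particular $|\hat x-\hat y|\le\sqrt{4R/\alpha}$. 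Now invoking a modulus of continuity $\omega$ of $v_{0}$ (this is exactly where uniform continuity is used), $v_{0}(\hat y)\ge v_{0}(\hat x)-\omega(|\hat x-\hat y|)$, and discarding the nonnegative $\varepsilon$-term,
\[ M_{\alpha,\varepsilon}\le u_{0}(\hat x)-v_{0}(\hat x)+\omega(|\hat x-\hat y|)\le M+\omega\big(\sqrt{4R/\alpha}\big), \]
so letting $1/\alpha\to0$, uniformly in $\varepsilon$, gives $\limsup M_{\alpha,\varepsilon}\le M$. Combining with the lower bound proves $M_{\alpha,\varepsilon}\to M$.

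There is no real obstacle here, but the subtlety worth flagging is that both limits are genuinely needed and play complementary roles: $\varepsilon\to0$ is what makes the functional coercive (so a maximizer exists) and is what enters the lower bound through the fixed test point $x_{0}$, whereas $1/\alpha\to0$ is precisely what forces $|\hat x-\hat y|\to0$ and thereby lets uniform continuity of $v_{0}$ (or symmetrically of $u_{0}$) close the upper bound; boundedness of $u_{0},v_{0}$ is used only to control the penalization term $\frac{\alpha}{2}|\hat x-\hat y|^{2}$ via the trivial lower bound $M_{\alpha,\varepsilon}\ge u_{0}(0)-v_{0}(0)$.
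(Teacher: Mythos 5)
Your proof is correct and follows essentially the same route as the paper's: a trivial lower bound from the diagonal, plus an upper bound obtained by locating a maximizer $(\hat x,\hat y)$, estimating $|\hat x-\hat y|\le\sqrt{4R/\alpha}$ from the boundedness of $u_0,v_0$, and then invoking the modulus of continuity of $v_0$. The only cosmetic difference is that the paper normalizes so that $M>0$ (hence $M_{\alpha,\varepsilon}\ge 0$ for $\varepsilon$ small, $\alpha$ large) to control the penalty term, whereas you instead test at the origin to get $M_{\alpha,\varepsilon}\ge u_0(0)-v_0(0)$; both yield the same $O(1/\sqrt{\alpha})$ bound on $|\hat x-\hat y|$.
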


\begin{proof}
Without loss of generality $M:=\sup_{x}\left[ u_{0}\left( x\right)
-v_{0}\left( x\right) \right] >0$; for otherwise replace $u_{0}$ by $%
u_{0}+2\left\vert M\right\vert $. Write $M_{\alpha ,\varepsilon }$ for the
achieved maximum (at $\hat{x},\hat{y}$, say) of the left-hand-side.
Obviously, $u_{0}\left( x\right) -v_{0}\left( x\right) -2\varepsilon
\left\vert x\right\vert ^{2}\leq M_{\alpha ,\varepsilon }$ for any $x$ and so%
\begin{equation*}
M\leq \lim \inf_{\substack{ \varepsilon \rightarrow 0  \\ \alpha \rightarrow
\infty }}M_{\alpha ,\varepsilon }.
\end{equation*}%
(It follows that we can and will consider $\varepsilon \,\left( \alpha
\right) $ small (large) enough so that $M_{\alpha ,\varepsilon }>0$.) On the
other hand, $\left\vert u_{0}\right\vert ,\left\vert v_{0}\right\vert \leq
R<\infty $ and so%
\begin{equation*}
0\leq M_{\alpha ,\varepsilon }\leq 2R-\frac{\alpha }{2}\left\vert \hat{x}-%
\hat{y}\right\vert ^{2}-\varepsilon \left( \left\vert \hat{x}\right\vert
^{2}+\left\vert \hat{y}\right\vert ^{2}\right)
\end{equation*}%
from which we deduce $\frac{\alpha }{2}\left\vert \hat{x}-\hat{y}\right\vert
^{2}\leq 2R$, or $\left\vert \hat{x}-\hat{y}\right\vert \leq \sqrt{4R/\alpha 
}$. By omitting the (positive) penaltity terms, we can also estimate%
\begin{eqnarray*}
M_{\alpha ,\varepsilon } &\leq &u_{0}\left( \hat{x}\right) -v_{0}\left( \hat{%
y}\right) \\
&\leq &u_{0}\left( \hat{x}\right) -v_{0}\left( \hat{x}\right) +\sigma
_{v_{0}}\left( \sqrt{4R/\alpha }\right) \\
&\leq &M+\sigma _{v_{0}}\left( \sqrt{4R/\alpha }\right)
\end{eqnarray*}%
where $\sigma _{v_{0}}$ denotes the modulus of continuity of $v_{0}$. It
follows that%
\begin{equation*}
\lim \sup_{_{\substack{ \varepsilon \rightarrow 0  \\ \alpha \rightarrow
\infty }}}M_{\alpha ,\varepsilon }\leq M
\end{equation*}%
which shows that the $\lim M_{\alpha ,\varepsilon }$ (as $\varepsilon
\rightarrow 0,\alpha \rightarrow \infty $) exists and is equal to $M$.
\end{proof}

\begin{lemma}
\label{LemmaPenalityOffDiag2}Let \bigskip $u\in \mathrm{bUSC}\left(
[0,T]\times \mathbb{R}^{n}\right) $ and $v\in \mathrm{bLSC}\left(
[0,T]\times \mathbb{R}^{n}\right) $. Consider a maximum point $\left( \hat{t}%
,\hat{x},\hat{y}\right) \in (0,T]\times \mathbb{R}^{n}\times \mathbb{R}^{n}$
of%
\begin{equation*}
\phi \left( t,x,y\right) =u\left( t,x\right) -v\left( t,y\right) -\frac{%
\alpha }{2}\left\vert x-y\right\vert ^{2}-\varepsilon \left( \left\vert
x\right\vert ^{2}+\left\vert y\right\vert ^{2}\right) .
\end{equation*}%
where $\alpha ,\varepsilon >0$. Then%
\begin{eqnarray}
\lim \sup_{\varepsilon \rightarrow 0}\alpha \left( \hat{x}-\hat{y}\right)
&=&C\left( \alpha \right) <\infty ,  \label{LemmaAppEst1} \\
\limsup_{\alpha \rightarrow \infty }\limsup_{\varepsilon \rightarrow
0}\varepsilon \left( \left\vert \hat{x}\right\vert ^{2}+\left\vert \hat{y}%
\right\vert ^{2}\right) &=&0,  \label{LemmaAppEst2} \\
\limsup_{\alpha \rightarrow \infty }\limsup_{\varepsilon \rightarrow
0}\left( \frac{\alpha }{2}\left\vert \hat{x}-\hat{y}\right\vert
^{2}+\left\vert \hat{x}-\hat{y}\right\vert \right) &=&0.
\label{LemmaAppEst3}
\end{eqnarray}
\end{lemma}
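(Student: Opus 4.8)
\medskip
\noindent\textbf{Proof strategy.} The plan is to deduce all three statements from the boundedness of $u$ and $v$ alone, by running the classical penalisation bookkeeping (the one-parameter version is \cite[lemma 3.1]{MR1118699UserGuide}) with the two parameters $\alpha $ and $\varepsilon $; no property of $F$ is needed. Write $R:=|u|_{\infty }\vee |v|_{\infty }$ and, for $\alpha ,\varepsilon >0$, put $M_{\alpha ,\varepsilon }:=\phi (\hat{t},\hat{x},\hat{y})=\sup_{[0,T]\times \mathbb{R}^{n}\times \mathbb{R}^{n}}\phi $ (the supremum is attained because $\phi $ is USC while $\phi \to -\infty $ as $|x|+|y|\to \infty $, the term $-\varepsilon (|x|^{2}+|y|^{2})$ acting as a spatial barrier). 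Comparing the value of $\phi $ at its maximiser with its value at $(\hat{t},0,0)$ gives $M_{\alpha ,\varepsilon }\geq u(\hat{t},0)-v(\hat{t},0)\geq -2R$, while directly $M_{\alpha ,\varepsilon }\leq 2R-\frac{\alpha }{2}|\hat{x}-\hat{y}|^{2}-\varepsilon (|\hat{x}|^{2}+|\hat{y}|^{2})$; subtracting,
\[
\frac{\alpha }{2}|\hat{x}-\hat{y}|^{2}+\varepsilon \bigl(|\hat{x}|^{2}+|\hat{y}|^{2}\bigr)\leq 4R .
\]
In particular $|\hat{x}-\hat{y}|\leq \sqrt{8R/\alpha }$, whence $|\alpha (\hat{x}-\hat{y})|\leq \sqrt{8R\alpha }<\infty $ uniformly in $\varepsilon $, which is (\ref{LemmaAppEst1}); the same bound shows $|\hat{x}-\hat{y}|\to 0$ as $\alpha \to \infty $ uniformly in $\varepsilon $, so the lower-order summand $|\hat{x}-\hat{y}|$ in (\ref{LemmaAppEst3}) is harmless.

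The two remaining (genuinely second-order) assertions I would extract from monotonicity. Since increasing either penalisation parameter decreases $\phi $ pointwise, $M_{\alpha ,\varepsilon }$ is non-increasing in $\alpha $ and in $\varepsilon $, and it lies in $[-2R,2R]$; hence $M_{\alpha }:=\lim_{\varepsilon \downarrow 0}M_{\alpha ,\varepsilon }$ exists for every $\alpha $, the map $\alpha \mapsto M_{\alpha }$ is non-increasing and bounded below, and therefore $M_{\alpha }-M_{2\alpha }\to 0$ as $\alpha \to \infty $. Now insert the $(\alpha ,\varepsilon )$-maximiser $(\hat{t},\hat{x},\hat{y})$ into $\phi $ evaluated with parameters $(\alpha ,\varepsilon /2)$, respectively $(2\alpha ,\varepsilon )$; using $M_{\alpha ,\varepsilon }=u(\hat{t},\hat{x})-v(\hat{t},\hat{y})-\frac{\alpha }{2}|\hat{x}-\hat{y}|^{2}-\varepsilon (|\hat{x}|^{2}+|\hat{y}|^{2})$ this yields immediately
\[
\frac{\varepsilon }{2}\bigl(|\hat{x}|^{2}+|\hat{y}|^{2}\bigr)\leq M_{\alpha ,\varepsilon /2}-M_{\alpha ,\varepsilon },\qquad \frac{\alpha }{2}|\hat{x}-\hat{y}|^{2}\leq M_{\alpha ,\varepsilon }-M_{2\alpha ,\varepsilon }.
\]
Letting $\varepsilon \downarrow 0$ with $\alpha $ fixed, the right side of the first inequality tends to $M_{\alpha }-M_{\alpha }=0$, so $\limsup_{\varepsilon \to 0}\varepsilon (|\hat{x}|^{2}+|\hat{y}|^{2})=0$ for each $\alpha $ (a fortiori (\ref{LemmaAppEst2})); letting $\varepsilon \downarrow 0$ in the second gives $\limsup_{\varepsilon \to 0}\frac{\alpha }{2}|\hat{x}-\hat{y}|^{2}\leq M_{\alpha }-M_{2\alpha }$, and then $\alpha \to \infty $ together with the first paragraph's bound on $|\hat{x}-\hat{y}|$ completes (\ref{LemmaAppEst3}).

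Every computation here is routine; the one point deserving care is the bookkeeping of the iterated limits. One must keep the order ``$\varepsilon \to 0$ first for fixed $\alpha $, then $\alpha \to \infty $'' throughout, so that the monotone limit $M_{\alpha }$ is well defined before it is used, and one should verify (via $M_{\alpha ,\varepsilon /2}\leq M_{\alpha }$ together with $M_{\alpha ,\varepsilon }\to M_{\alpha }$, and likewise $M_{2\alpha ,\varepsilon }\to M_{2\alpha }$) that the differences $M_{\alpha ,\varepsilon /2}-M_{\alpha ,\varepsilon }$ and $M_{\alpha ,\varepsilon }-M_{2\alpha ,\varepsilon }$ really have the claimed $\varepsilon $-limsup. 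I would also emphasise that only boundedness and the upper/lower semicontinuity of $u,v$ are used — continuity is never invoked — and that the whole argument is nothing but the Crandall--Ishii penalisation lemma run with two parameters instead of one.
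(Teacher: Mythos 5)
Your proof is correct, but for the two substantive limits (\ref{LemmaAppEst2}), (\ref{LemmaAppEst3}) it follows a genuinely different route from the paper. The first step (testing $\phi$ at a fixed point to get $\frac{\alpha }{2}|\hat{x}-\hat{y}|^{2}+\varepsilon (|\hat{x}|^{2}+|\hat{y}|^{2})\leq 4R$, hence (\ref{LemmaAppEst1}) and the smallness of the lower-order term $|\hat{x}-\hat{y}|$) coincides with the paper's Step 1. For the rest, the paper introduces the auxiliary quantity $M(h)=\sup_{|x-y|\leq h}[u(t,x)-v(t,y)]$ and its limit $M^{\prime }$, proves the two-sided sandwich $\limsup [u(\hat{t},\hat{x})-v(\hat{t},\hat{y})]\leq M^{\prime }\leq \liminf M_{\alpha ,\varepsilon }$ in the iterated limit (which requires selecting a near-optimal family $(t_{h},x_{h},y_{h})$ and coupling $\varepsilon =\varepsilon (h)$ suitably), and then reads off that the penalty terms vanish. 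You instead run the parameter-doubling argument of \cite[Lemma 3.1]{MR1118699UserGuide} with two parameters: comparing $M_{\alpha ,\varepsilon }$ with $M_{\alpha ,\varepsilon /2}$ and with $M_{2\alpha ,\varepsilon }$, and exploiting that the monotone bounded limits $M_{\alpha }=\lim_{\varepsilon \downarrow 0}M_{\alpha ,\varepsilon }$ exist and that $M_{\alpha }-M_{2\alpha }\rightarrow 0$. Both arguments are valid and use only boundedness plus attainment of the maximum; your bookkeeping of the iterated limits (first $\varepsilon \rightarrow 0$ at fixed $\alpha $, then $\alpha \rightarrow \infty $) is exactly what is needed. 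Your route is shorter, avoids the choice of near-optimal points, and in fact delivers a slightly stronger conclusion than (\ref{LemmaAppEst2}), namely $\limsup_{\varepsilon \rightarrow 0}\varepsilon (|\hat{x}|^{2}+|\hat{y}|^{2})=0$ for every fixed $\alpha $. What the paper's detour through $M^{\prime }$ buys is the additional identification $\lim M_{\alpha ,\varepsilon }=M^{\prime }=\lim_{h\rightarrow 0}\sup_{|x-y|\leq h}(u-v)$, i.e.\ information about the value of the penalised maximum and not only about the penalty terms; this extra information is not needed for the lemma as stated, so your argument fully suffices.
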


\begin{remark}
A similar lemma (without $t$ dependence) is found in Barles' book \cite[%
Lemme 4.3]{MR1613876}; the order in which limits are taken is important and
suggests the notation 
\begin{equation*}
\limsup_{\varepsilon <<\frac{1}{\alpha }\,\rightarrow 0}\,\left( ...\right)
:=\limsup_{\alpha \rightarrow \infty }\limsup_{\varepsilon \rightarrow
0}\,\left( ...\right) ,\,\,\liminf_{\varepsilon <<\frac{1}{\alpha }%
\,\rightarrow 0}\,\left( ...\right) :=\liminf_{\alpha \rightarrow \infty
}\liminf_{\varepsilon \rightarrow 0}\,\left( ...\right) .
\end{equation*}
\end{remark}

\begin{proof}
We start with some notation, where unless otherwise stated $t\in \left[ 0,T%
\right] $ and $x,y\in \mathbb{R}^{n}$,%
\begin{eqnarray*}
M_{\alpha ,\varepsilon } &:&=\sup_{t,x,y}\phi \left( t,x,y\right) =u\left( 
\hat{t},\hat{x}\right) -v\left( \hat{t},\hat{y}\right) -\frac{\alpha }{2}%
\left\vert \hat{x}-\hat{y}\right\vert ^{2}-\varepsilon \left( \left\vert 
\hat{x}\right\vert ^{2}+\left\vert \hat{y}\right\vert ^{2}\right) ; \\
M\left( h\right) &:&=\sup_{t,x,y:\left\vert x-y\right\vert \leq h}\left[
u\left( t,x\right) -v\left( t,y\right) \right] \geq \sup_{t,x}\left[ u\left(
t,x\right) -v\left( t,x\right) \right] \\
M^{\prime } &:&=\,\downarrow \lim_{h\rightarrow 0}M\left( h\right)
\end{eqnarray*}%
(As indicated, $M^{\prime }$ exists as limit of $M\left( h\right) $,
non-increasing in $h$ and bounded from below.)

\textbf{Step 1:} Take $t=x=y=0$ as argument of $\phi \left( t,x,y\right) $.
Since $M_{\alpha ,\varepsilon }=\sup \phi $ we have 
\begin{equation*}
c=u\left( 0,0\right) -v\left( 0,0\right) \leq M_{\alpha ,\varepsilon
}=u\left( \hat{t},\hat{x}\right) -v\left( \hat{t},\hat{y}\right) -\frac{%
\alpha }{2}\left\vert \hat{x}-\hat{y}\right\vert ^{2}-\varepsilon \left(
\left\vert \hat{x}\right\vert ^{2}+\left\vert \hat{y}\right\vert ^{2}\right)
\end{equation*}%
and hence, for a suitable constant $C$ (e.g. $C^{2}:=\sup u+\sup \left(
-v\right) +c$) 
\begin{equation*}
\frac{\alpha }{2}\left\vert \hat{x}-\hat{y}\right\vert ^{2}+\varepsilon
\left( \left\vert \hat{x}\right\vert ^{2}+\left\vert \hat{y}\right\vert
^{2}\right) \leq C^{2}
\end{equation*}%
which implies%
\begin{equation}
\left\vert \hat{x}-\hat{y}\right\vert \leq C\sqrt{2/\alpha }
\label{xhatMinusyhat}
\end{equation}%
and hence $\alpha \left\vert \hat{x}-\hat{y}\right\vert \leq \sqrt{2\alpha }%
C $ which is the first claimed estimate (\ref{LemmaAppEst1}).

\textbf{Step 2:} We first argue that it is enough to show the (two)
estimates 
\begin{equation}
\limsup_{\varepsilon <<\frac{1}{\alpha }\,\rightarrow 0}\left[ \,u\left( 
\hat{t},\hat{x}\right) -v\left( \hat{t},\hat{y}\right) \right] \leq
M^{\prime }\leq \liminf_{\varepsilon <<\frac{1}{\alpha }\,\rightarrow
0}\,M_{\alpha ,\varepsilon }.  \label{LemmaAppLeftToDo}
\end{equation}%
Indeed, from $\frac{\alpha }{2}\left\vert \hat{x}-\hat{y}\right\vert
^{2}+\varepsilon \left( \left\vert \hat{x}\right\vert ^{2}+\left\vert \hat{y}%
\right\vert ^{2}\right) =u\left( \hat{t},\hat{x}\right) -v\left( \hat{t},%
\hat{y}\right) -M_{\alpha ,\varepsilon }$ it readily follows that 
\begin{eqnarray*}
\limsup_{\varepsilon <<\frac{1}{\alpha }\,\rightarrow 0}\frac{\alpha }{2}%
\left\vert \hat{x}-\hat{y}\right\vert ^{2}+\varepsilon \left( \left\vert 
\hat{x}\right\vert ^{2}+\left\vert \hat{y}\right\vert ^{2}\right) &\leq
&\limsup_{\varepsilon <<\frac{1}{\alpha }\,\rightarrow 0}\left[ u\left( \hat{%
t},\hat{x}\right) -v\left( \hat{t},\hat{y}\right) -M_{\alpha ,\varepsilon }%
\right] \\
&=&\limsup_{\varepsilon <<\frac{1}{\alpha }\,\rightarrow 0}\left[ \,u\left( 
\hat{t},\hat{x}\right) -v\left( \hat{t},\hat{y}\right) \right]
-\liminf_{\varepsilon <<\frac{1}{\alpha }\,\rightarrow 0}\,M_{\alpha
,\varepsilon } \\
&\leq &0\text{ (and hence }=0\text{).}
\end{eqnarray*}%
This already gives (\ref{LemmaAppEst2}) and also (\ref{LemmaAppEst3}),
noting that%
\begin{equation*}
\left\vert \hat{x}-\hat{y}\right\vert =\alpha ^{-1/2}\alpha ^{1/2}\left\vert 
\hat{x}-\hat{y}\right\vert \leq \frac{1}{2\alpha }+\frac{\alpha }{2}%
\left\vert \hat{x}-\hat{y}\right\vert ^{2}.
\end{equation*}%
We are left to show (\ref{LemmaAppLeftToDo}). For the first estimate, it
suffices to note that, from (\ref{xhatMinusyhat}) and the definition of $%
M\left( h\right) $ applied with $h=C\sqrt{2/\alpha }$,%
\begin{eqnarray*}
\limsup_{\varepsilon <<\frac{1}{\alpha }\,\rightarrow 0}\left[ \,u\left( 
\hat{t},\hat{x}\right) -v\left( \hat{t},\hat{y}\right) \right] &\leq
&\limsup_{\varepsilon <<\frac{1}{\alpha }\,\rightarrow 0}M\left( \sqrt{\frac{%
2}{\alpha }}C\right) \\
&=&\lim_{\alpha \rightarrow \infty }M\left( \sqrt{\frac{2}{\alpha }}C\right)
=M^{\prime }.
\end{eqnarray*}%
We now turn to the second estimate in (\ref{LemmaAppLeftToDo}). From the
very definition of $M^{\prime }$ as $\lim_{h\rightarrow 0}M\left( h\right) $%
, there exists a family $\left( t_{h},x_{h},y_{h}\right) $ so that 
\begin{equation}
|x_{h}-y_{h}|\,\leq h\text{ and }u(t_{h},x_{h})-v(t_{h},x_{h})\rightarrow
M^{\prime }\text{ as }h\rightarrow 0  \label{eqBoundOnDistance}
\end{equation}%
For every $\alpha ,\varepsilon $ we may take $\left(
t_{h},x_{h},y_{h}\right) $ as argument of $\phi $; since $M_{\alpha
,\varepsilon }=\sup \phi $ we have 
\begin{equation}
u(t_{h},x_{h})-v(t_{h},y_{h})-\frac{\alpha }{2}h^{2}-\varepsilon
(|x_{h}|^{2}+|y_{h}|^{2})\leq M_{\alpha ,\varepsilon }  \label{eqSubseq}
\end{equation}%
Take now $\varepsilon =\varepsilon \left( h\right) \rightarrow 0$ with $%
h\rightarrow 0$; fast enough so that $\varepsilon
(|x_{h}|^{2}+|y_{h}|^{2})\rightarrow 0$; for instance $\varepsilon \left(
h\right) :=$ $h/\left( 1+(|x_{h}|^{2}+|y_{h}|^{2})\right) $ would do. It
follows that%
\begin{eqnarray*}
M^{\prime } &=&\lim_{h\rightarrow 0}u(t_{h},x_{h})-v(t_{h},y_{h}) \\
&=&\liminf_{h\,\rightarrow 0}u(t_{h},x_{h})-v(t_{h},y_{h})-\frac{\alpha }{2}%
h^{2}-\varepsilon (|x_{h}|^{2}+|y_{h}|^{2}) \\
&\leq &\liminf_{h\,\rightarrow 0}M_{\alpha ,\varepsilon
_{h}}=\liminf_{\varepsilon \,\rightarrow 0}M_{\alpha ,\varepsilon }\text{ by
monotonicity of }M_{\alpha ,\varepsilon }\text{ in }\varepsilon .
\end{eqnarray*}%
Since this is valid for every $\alpha $, we also have%
\begin{equation*}
M^{\prime }\leq \liminf_{\alpha \,\rightarrow \infty }\liminf_{\varepsilon
\,\rightarrow 0}M_{\alpha ,\varepsilon }.
\end{equation*}%
This is precisely the second estimate in (\ref{LemmaAppLeftToDo}) and so the
proof is finished.
\end{proof}

\section{$\mathrm{BUC}\left( \left[ 0,T\right] \times \mathbb{R}^{n}\right) $
viscosity solutions}

If $F$ satisfies the above structural condition with the further
strengthening that $F$ is \underline{bounded} whenever $u,p,X$ remain
bounded, then any bounded viscosity solution with $\mathrm{BUC}\left( 
\mathbb{R}^{n}\right) $ initial data is in $\mathrm{BUC}\left( \left[ 0,T%
\right] \times \mathbb{R}^{n}\right) .$ More precisely,

\begin{corollary}
\label{CorBUCts}Assume $F$ satisfies the assumptions of section \ref%
{StructAssF} with assumption \ref{StructAssF}) strengthened to%
\begin{equation}
\forall R>0:F|_{\left[ 0,T\right] \times \mathbb{R}^{n}\times \left[ -R,R%
\right] \times B_{R}\times M_{R}}\text{ is bounded, uniformly continuous}.
\label{StructAssFenhanced}
\end{equation}%
Let $u\in \mathrm{BC}\left( [0,T]\times \mathbb{R}^{n}\right) $ be a
viscosity solution to $\partial _{t}-F=0$ on $(0,T)\times \mathbb{R}^{n}$
with intial data $u_{0}=u\left( 0,\cdot \right) \in \mathrm{BUC}\left( 
\mathbb{R}^{n}\right) $. Then 
\begin{equation*}
u=u\left( t,x\right) \in \mathrm{BUC}\left( [0,T]\times \mathbb{R}%
^{n}\right) \text{.}
\end{equation*}
\end{corollary}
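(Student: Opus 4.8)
The plan is to produce a spatial modulus of $u(t,\cdot)$ that is uniform in $t$, then a temporal modulus that is uniform in $x$, and finally combine the two. For the spatial modulus I would note that a bounded continuous viscosity solution is at once a bounded sub- and a bounded super-solution, so Theorem~\ref{ThmMain}(ii) applies with the roles of both ``$u$'' and ``$v$'' played by $u$; since $u_0\le u_0$ trivially, the key estimate gives $u(t,x)-u(t,y)\le\inf_\alpha\bigl[\tfrac\alpha2|x-y|^2+l(\alpha)\bigr]$ for all $(t,x,y)\in[0,T]\times\mathbb R^n\times\mathbb R^n$, with $l(\alpha)\downarrow0$. Swapping $x\leftrightarrow y$ and setting $\sigma(\rho):=\inf_\alpha\bigl[\tfrac\alpha2\rho^2+l(\alpha)\bigr]$ — a nondecreasing function with $\sigma(0+)=0$, hence a modulus — this is exactly the $2\epsilon$-argument of the Remark after Theorem~\ref{ThmMain} and yields $|u(t,x)-u(t,y)|\le\sigma(|x-y|)$ uniformly in $t\in[0,T]$; in particular $u(s,\cdot)$ carries the \emph{same} modulus $\sigma$ for every $s\in[0,T)$.

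For the temporal modulus, fix $s\in[0,T)$ and $\bar x\in\mathbb R^n$, write $R_0:=\|u\|_\infty$, and choose once and for all a smooth nondecreasing $\zeta\colon[0,\infty)\to[0,2]$ with $\zeta(\rho)=\rho$ on $[0,1]$ and $\zeta$ constant on $[3,\infty)$. For $r>0$ I would set
\[
\chi(t,x):=u(s,\bar x)+\sigma(r)+2R_0\,\zeta\!\Bigl(\tfrac{|x-\bar x|^2}{r^2}\Bigr)+L(r)(t-s),\qquad (t,x)\in[s,T]\times\mathbb R^n .
\]
The truncation $\zeta$ keeps $\chi$, $D\chi$, $D^2\chi$ bounded, with bounds on $D\chi,D^2\chi$ depending only on $r,R_0$ and not on $L$. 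Since $F$ is non-increasing in its third slot and $\chi\ge -R_0$ here, $F(t,x,\chi,D\chi,D^2\chi)\le F(t,x,-R_0,D\chi,D^2\chi)\le C_F(r,R_0)<\infty$ by the strengthened assumption~\eqref{StructAssFenhanced}; taking $L(r):=C_F(r,R_0)$ makes $\chi$ a bounded classical — hence viscosity — super-solution of $\partial_t-F=0$ on $(s,T)\times\mathbb R^n$ with $\chi(s,\cdot)\in\mathrm{BUC}(\mathbb R^n)$. Moreover $\chi(s,\cdot)\ge u(s,\cdot)$: for $|x-\bar x|\le r$ one has $u(s,x)\le u(s,\bar x)+\sigma(r)\le\chi(s,x)$, and for $|x-\bar x|>r$ one has $\chi(s,x)\ge u(s,\bar x)+\sigma(r)+2R_0\ge R_0\ge u(s,x)$. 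Comparison — Theorem~\ref{ThmMain} applied on $[s,T]$, its proof being insensitive to the left endpoint since the structural hypotheses on $F$ are uniform in $t$ — then gives $u\le\chi$ on $[s,T]\times\mathbb R^n$, and evaluating at $x=\bar x$ (where $\zeta(0)=0$) yields $u(t,\bar x)-u(s,\bar x)\le\sigma(r)+L(r)(t-s)$. The mirror-image sub-solution barrier $u(s,\bar x)-\sigma(r)-2R_0\zeta(|x-\bar x|^2/r^2)-L(r)(t-s)$ gives the reverse inequality with the same $L(r)$, so $|u(t,x)-u(s,x)|\le\sigma(r)+L(r)|t-s|$ for all $0\le s\le t\le T$, $x\in\mathbb R^n$, $r>0$.

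Finally I would set $\omega(h):=\inf_{r>0}\bigl[\sigma(r)+L(r)h\bigr]$; choosing $r$ with $\sigma(r)<\epsilon/2$ and then $h<\epsilon/(2L(r))$ shows $\omega(0+)=0$, so $\omega$ is a modulus, and $|u(t,x)-u(s,y)|\le\sigma(|x-y|)+\omega(|t-s|)$ gives $u\in\mathrm{BUC}([0,T]\times\mathbb R^n)$. The step I expect to be the real obstacle is the barrier construction: because $F$ is tamed only while $u,p,X$ stay bounded, the naive quadratic barrier $u(s,\bar x)+\sigma(r)+\tfrac{C}{r^2}|x-\bar x|^2+Lt$ fails on two counts — its value and gradient are unbounded in $x$, so $F$ along it is uncontrolled, and it is not a \emph{bounded} super-solution, which is what the comparison theorem of this paper needs. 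Working around this forces the bounded truncation $\zeta$, the correct ordering of the constants (fix the height $2R_0$ first so the barrier dominates the data at time $s$, \emph{then} read off $L=L(r)$ from the bound for $F$ over the now-fixed bounded set of $(u,p,X)$-values), and the observation that monotonicity of $F$ in $u$ lets one replace the large value $\chi$ by the fixed constant $-R_0$ before invoking boundedness of $F$; those bookkeeping points are where a careful write-up will spend its effort.
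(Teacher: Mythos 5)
Your proof is correct, and it takes a genuinely different route from the paper for the temporal modulus. The paper works \emph{locally}: it posits the barrier $\chi(t,y)=u(t_0,x)+\eta+C|y-x|^2+K(t-t_0)$ on the bounded cylinder $[t_0,T]\times B_R(x_0)$, chooses $C=C(\eta)$ so that $u\le\chi$ on the parabolic boundary (the side via $C\ge 8\|u\|_\infty/R^2$, the bottom via the spatial modulus), chooses $K=K(\eta)$ large enough to make $\chi$ a strict classical supersolution there (possible because $|y-x|\le 2R$ keeps $(D\chi,D^2\chi)$ in a bounded set), and concludes $u\le\chi$ in the interior by an elementary maximum-principle contradiction — part (i) of Theorem~\ref{ThmMain} is invoked only to cover the case where the interior maximum sits on $\{t=T\}$. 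You instead build a \emph{global} bounded barrier by truncating the quadratic with $\zeta$, observe that boundedness of $\chi,D\chi,D^2\chi$ together with \eqref{StructAssFenhanced} and properness yields a classical super/subsolution on all of $(s,T)\times\mathbb R^n$, and then re-invoke the full comparison result of Theorem~\ref{ThmMain} (time-shifted to start at $s$) to conclude. Both arguments must solve the same core problem — the naive unbounded quadratic barrier escapes the region where $F$ is controlled — and each resolves it differently: the paper restricts the domain, you restrict the barrier. Your version is cleaner in that there is no parabolic-boundary bookkeeping, at the cost of routing through the heavier comparison machinery twice; the paper's local argument is more elementary and keeps the dependence of constants on $(\eta,R,\|u\|_\infty,m)$ fully explicit, which is the part of the write-up where, as you anticipate, the careful effort is spent. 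Two small points worth making explicit in a final write-up: (a) the constant $L(r)$ should be declared as a \emph{two-sided} bound on $|F|$ over the relevant compact set, since the subsolution barrier needs a lower bound on $F$ rather than the upper bound you display; (b) the time-shift of Theorem~\ref{ThmMain} to $[s,T]$ is harmless but deserves a sentence, exactly as you give it, since the stated theorem fixes the left endpoint at $0$.
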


\begin{proof}
We adapt the argument from \cite[Lemma 9.1]{MR2010963}. From theorem \ref%
{ThmMain}, there exists a spatial modulus $m$ for $u\left( t,\cdot \right) $%
, uniform over $t\in \left[ 0,T\right] $. Given $0\leq t_{0}<t\leq T$ and $%
x_{0},x\in \mathbb{R}^{n}$ we now estimate, using the triangle inequality,%
\begin{equation*}
\left\vert u\left( t,x\right) -u\left( t_{0},x_{0}\right) \right\vert \leq
m\left( \left\vert x_{0}-x\right\vert \right) +\left\vert u\left(
t,x_{0}\right) -u\left( t_{0},x_{0}\right) \right\vert .
\end{equation*}%
We shall show that $\left\vert u\left( t,x_{0}\right) -u\left(
t_{0},x_{0}\right) \right\vert $ goes to zero as $t\downarrow t_{0}$, 
\textit{uniformly in }$x_{0}\in \mathbb{R}^{n}$ \textit{and }$t_{0}\in
\lbrack 0,T)$. We will show a little more. Fix $x_{0}\in \mathbb{R}^{n}$ and 
$R\in \left( 0,\infty \right) $; for instance $R=1$ would do (and there is
no need to track dependence in $R$). We claim that for every $\eta >0$ one
can find constants $C=C\left( \eta \right) ,K=K\left( \eta \right) $, 
\textit{not dependent on} $x_{0}$ \textit{and} $t_{0}$, such that, for all $%
x\in B_{R/2}\left( x_{0}\right) $ and $y\in B_{R}\left( x_{0}\right) $ and
all $t\in \lbrack t_{0},T]$%
\begin{equation}
u\left( t,y\right) -u\left( t_{0},x\right) \leq \eta +C\left\vert
y-x\right\vert ^{2}+K\left( t-t_{0}\right) \text{ }  \label{BarlesEqu42}
\end{equation}%
and%
\begin{equation}
u\left( t,y\right) -u\left( t_{0},x\right) \geq -\eta -C\left\vert
y-x\right\vert ^{2}-K\left( t-t_{0}\right) .  \label{BarlesEqu43}
\end{equation}%
(Choosing $x=y=x_{0}$ in these estimates shows that $\left\vert u\left(
t,x_{0}\right) -u\left( t_{0},x_{0}\right) \right\vert \leq \inf \left\{
\eta +K\left( \eta \right) \left( t-t_{0}\right) :\eta >0\right\} $ which
immediately gives the desired uniform continuity in time, uniformly in $%
x_{0} $.) We only prove (\ref{BarlesEqu42}), (\ref{BarlesEqu43}) being
proved in an analogous way. In the sequel, $x$ is fixed in $B_{R/2}\left(
x_{0}\right) $. Rewrite (\ref{BarlesEqu42}) as%
\begin{equation*}
u-\chi \leq 0\text{ on }[t_{0},T]\times B_{R}\left( x_{0}\right)
\end{equation*}%
where $\chi \left( t,y\right) :=u\left( t_{0},x\right) +\eta +C\left\vert
y-x\right\vert ^{2}+K\left( t-t_{0}\right) $. We shall see below we can find 
$C$, the choice of which only depends on $\eta $ (and in a harmless way on $%
\left\vert u\right\vert _{\infty ;\left[ 0,T\right] \times \mathbb{R}^{n}},R$
and $m\left( \cdot \right) $ \textit{but not on }$K$ \textit{and not on }$%
x_{0},t_{0}$), such that $u-\chi \leq 0$ on the parabolic boundary of $%
[t_{0},T]\times B_{R}\left( x_{0}\right) $. The extension to the interior is
then based on the maximum principle. More precisely, we can chose $K$
depending on $\eta $ (and again in a harmless way $\left\vert u\right\vert
_{\infty ;\left[ 0,T\right] \times \mathbb{R}^{n}},R$ and $m\left( \cdot
\right) $) such that $\chi $ is a (smooth) strict supersolution of $\partial
_{t}-F$ on $(t_{0},T)\times B_{R}\left( x_{0}\right) $;%
\begin{equation*}
K-F\left( t,y,\chi (t,y),2C\left( y-x\right) ,2CI\right) >0\text{ on }%
(t_{0},T)\times B_{R}\left( x_{0}\right) \text{.}
\end{equation*}%
Indeed, by properness we have 
\begin{equation*}
K-F\left( t,y,\chi (t,y),2C\left( y-x\right) ,2CI\right) >K-F\left(
t,y,-\left\vert u\right\vert _{\infty },2C\left( y-x\right) ,2CI\right) ;
\end{equation*}%
noting $\left\vert y-x\right\vert \leq 2R$ so that $p:=$ $2C\left(
y-x\right) ,X:=2CI$ remain in a bounded set whose size may depend on $\eta $
through $C$, it then follows by our structual assumption on the
non-linearity \footnote{%
... notably boundedness of $F\left( \cdot ,\cdot ,y,p,X\right) $ when $y,p,X$
remain in a bounded set ...} that we can pick $K=K\left( \eta \right) $
large enough such as to achieve the claimed strict inequality. (Note that
this choice of $K$ is uniformly in $t_{0}$ provided we can find $C$ with the
correct dependences.) Since, on the other hand, $u$ is a viscosity solution
(hence subsolution), it follows from the very definition of a subsolution
that%
\begin{equation*}
K-F\left( \hat{t},\hat{y},\chi (t,y),2C\left( \hat{y}-x\right) ,2CI\right)
\leq 0
\end{equation*}%
whenever $\left( \hat{t},\hat{y}\right) \in (t_{0},T]\times B_{R}\left(
x_{0}\right) $ is a maximum point of $u-\chi $. (Note that $\hat{t}=T$ is
possible here, we then rely on part (i) of theorem \ref{ThmMain}.) This
contradiction shows that maximum points of $u-\chi $ over $[t_{0},T]\times 
\bar{B}_{R}\left( x_{0}\right) $ are necessarily achieved on the parabolic
boundary%
\begin{equation*}
\left( t,y\right) \in \lbrack t_{0},T]\times \partial B_{R}\left(
x_{0}\right) \cup \left\{ t_{0}\right\} \times \bar{B}_{R}\left(
x_{0}\right) .
\end{equation*}%
The remainder of the proof is thus concerned with showing that $u-\chi \leq
0 $ on this parabolic boundary. Consider first the case that $t\in \left[
t_{0},T\right] $ and $\left\vert y-x_{0}\right\vert =R$. Since $x\in
B_{R/2}\left( x_{0}\right) $ we must have $\left\vert y-x\right\vert \geq
R/2 $ and it thus suffices to take $C\geq 8\left\vert u\right\vert _{\infty ;%
\left[ 0,T\right] \times \mathbb{R}^{n}}/R^{2}$ to ensure that%
\begin{equation*}
u\left( t_{0},y\right) \leq u\left( t_{0},x\right) +\eta +C\left\vert
y-x\right\vert ^{2}+K\left( t-t_{0}\right)
\end{equation*}%
for all $t\in \left[ t_{0},T\right] $ and $y\in B_{R}\left( x_{0}\right) $,
and any $\eta ,K\geq 0$. The second case to be considered is $t=t_{0}$ and $%
y\in \bar{B}_{R}\left( x_{0}\right) $. We want to see that for every $\eta $
there exists $C=C\left( \eta \right) $ such that 
\begin{equation*}
u\left( t_{0},y\right) \leq u\left( t_{0},x\right) +\eta +C\left\vert
y-x\right\vert ^{2}\text{ for all }y\in \bar{B}_{R}\left( x_{0}\right) ;
\end{equation*}%
but this follows immediately from the fact (cf. theorem \ref{ThmMain}) that $%
u\left( t_{0},\cdot \right) $ has a spatial modulus $m$. Indeed:\ If there
were $\eta >0$ such that for all $C$ there are points $y_{C}$ so that $%
u\left( t_{0},y\right) >u\left( t_{0},x\right) +\eta +C\left\vert
y-x\right\vert ^{2}$, then $\left\vert y_{C}-x\right\vert ^{2}\leq
2\left\vert u\right\vert _{\infty ;\left[ 0,T\right] \times \mathbb{R}%
^{n}}/C\rightarrow 0$ with $C\rightarrow \infty $ and a contradiction to%
\begin{equation*}
m\left( \left\vert y_{C}-x\right\vert \right) \geq u\left( t_{0},y\right)
-u\left( t_{0},x\right) \geq \eta >0\text{.}
\end{equation*}%
is obtained as soon as $C$ is chosen large enough and this choice depends
only on $\eta ,\left\vert u\right\vert _{\infty ;\left[ 0,T\right] \times 
\mathbb{R}^{n}}$ and $m$. Since all these quantities are independent of $%
t_{0}$, so is our choice of $C$.
\end{proof}

\section{Existence}

At last, we discuss existence via Perron's Method; the only difficulty in
the proof is to produce subsolutions and supersolutions.

\begin{theorem}
Assume $F$ satisfies the assumptions of section \ref{StructAssF} with
assumption \ref{StructAssF}) strenghened to%
\begin{equation*}
\forall R>0:F|_{\left[ 0,T\right] \times \mathbb{R}^{n}\times \left[ -R,R%
\right] \times B_{R}\times M_{R}}\text{ is bounded, uniformly continuous.}
\end{equation*}%
Let $u_{0}\in \mathrm{BUC}\left( \mathbb{R}^{n}\right) $. Then there exists $%
u=u\left( t,x\right) \in \mathrm{BUC}\left( [0,T]\times \mathbb{R}%
^{n}\right) $ such that $u$ is a viscosity solution to the initial value
problem%
\begin{eqnarray*}
\partial _{t}-F &=&0\text{ on }(0,T]\times \mathbb{R}^{n}, \\
u\left( 0,\cdot \right) &=&u_{0}\text{.}
\end{eqnarray*}%
(By Theorem \ref{ThmMain} this solution is unique in the class of bounded
viscosity solutions.)
\end{theorem}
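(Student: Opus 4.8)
The plan is to use Perron's method; the only genuine work is to produce, for the given $u_{0}\in \mathrm{BUC}\left( \mathbb{R}^{n}\right) $, a bounded subsolution $\underline{u}$ and a bounded supersolution $\overline{u}$ of $\partial _{t}-F=0$ on $\left( 0,T\right) \times \mathbb{R}^{n}$ with $\underline{u}\leq \overline{u}$ and $\underline{u}\left( 0,\cdot \right) =\overline{u}\left( 0,\cdot \right) =u_{0}$, after which Theorem \ref{ThmMain} and Corollary \ref{CorBUCts} do the rest. For the barriers: since $u_{0}\in \mathrm{BUC}\left( \mathbb{R}^{n}\right) $, exactly as in the last case of the proof of Corollary \ref{CorBUCts}, for every $\zeta >0$ there is $C_{\zeta }$ --- depending only on $\zeta $, $\left\vert u_{0}\right\vert _{\infty }$ and the modulus of $u_{0}$ --- with $u_{0}\left( y\right) \leq u_{0}\left( x\right) +\zeta +C_{\zeta }\left\vert y-x\right\vert ^{2}$ for all $x,y$. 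Fixing $\zeta >0$ and $x\in \mathbb{R}^{n}$, the enhanced structural hypothesis (boundedness of $F$ on bounded sets) lets us --- again as in the proof of Corollary \ref{CorBUCts} --- pick $K_{\zeta }$ (same dependence) so large that $\overline{w}_{x,\zeta }\left( t,y\right) :=u_{0}\left( x\right) +\zeta +C_{\zeta }\left\vert y-x\right\vert ^{2}+K_{\zeta }t$ is a smooth classical supersolution on $\left( 0,T\right) \times \mathbb{R}^{n}$, and symmetrically $\underline{w}_{x,\zeta }\left( t,y\right) :=u_{0}\left( x\right) -\zeta -C_{\zeta }\left\vert y-x\right\vert ^{2}-K_{\zeta }t$ a classical subsolution. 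Set $\overline{u}:=\inf_{x,\zeta }\overline{w}_{x,\zeta }$, $\underline{u}:=\sup_{x,\zeta }\underline{w}_{x,\zeta }$; these are bounded (e.g. $-\left\vert u_{0}\right\vert _{\infty }\leq \overline{u}\leq \left\vert u_{0}\right\vert _{\infty }+1+K_{1}T$, similarly for $\underline{u}$) and, as an infimum of supersolutions resp. a supremum of subsolutions, are a viscosity super- resp. subsolution (passing to the relevant semicontinuous envelope in the usual way). The quadratic bound gives $\overline{w}_{x,\zeta }\left( 0,y\right) \geq u_{0}\left( y\right) $ for all $x,\zeta $, while $x=y$, $\zeta \downarrow 0$ force $\overline{u}\left( 0,\cdot \right) \leq u_{0}$, so $\overline{u}\left( 0,\cdot \right) =u_{0}$, and likewise $\underline{u}\left( 0,\cdot \right) =u_{0}$; indeed $u_{0}\left( y\right) \leq \overline{u}\left( t,y\right) \leq u_{0}\left( y\right) +\zeta +K_{\zeta }t$ shows the barriers are continuous up to $t=0$ with trace $u_{0}$. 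Finally, $\underline{w}_{x_{1},\zeta _{1}}\left( 0,\cdot \right) \leq u_{0}\leq \overline{w}_{x_{2},\zeta _{2}}\left( 0,\cdot \right) $ and comparison (Theorem \ref{ThmMain}, or elementarily since these functions are explicit) give $\underline{w}_{x_{1},\zeta _{1}}\leq \overline{w}_{x_{2},\zeta _{2}}$ on $\left[ 0,T\right] \times \mathbb{R}^{n}$, hence $\underline{u}\leq \overline{u}$.

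With $\underline{u}\leq \overline{u}$ available, the parabolic version of Perron's method (the parabolic analogue of \cite[\S 4]{MR1118699UserGuide}; see also \cite[Chapter~5]{MR2179357FS}) produces
\begin{equation*}
u\left( t,x\right) :=\sup \left\{ w\left( t,x\right) :w\text{ a subsolution on }\left( 0,T\right) \times \mathbb{R}^{n},\ \underline{u}\leq w\leq \overline{u}\right\} ,
\end{equation*}
with admissible class non-empty (it contains $\underline{u}$), for which $\underline{u}\leq u_{\ast }\leq u^{\ast }\leq \overline{u}$ and $u^{\ast }$ is a subsolution, $u_{\ast }$ a supersolution, on $\left( 0,T\right) \times \mathbb{R}^{n}$. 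Since the barriers are continuous up to $t=0$ with trace $u_{0}$, the squeeze gives $u_{\ast }\left( 0,\cdot \right) =u^{\ast }\left( 0,\cdot \right) =u_{0}$. Applying Theorem \ref{ThmMain}(ii) to the subsolution $u^{\ast }$ and supersolution $u_{\ast }$, whose initial data trivially satisfy $u_{0}\leq u_{0}$, yields $u^{\ast }\leq u_{\ast }$ on $\left[ 0,T\right] \times \mathbb{R}^{n}$; together with the always-valid reverse inequality we get $u^{\ast }=u_{\ast }=:u$, so $u$ is continuous and is a bounded viscosity solution of $\partial _{t}-F=0$ on $\left( 0,T\right) \times \mathbb{R}^{n}$ with $u\left( 0,\cdot \right) =u_{0}$, and by Theorem \ref{ThmMain}(i) the sub-/super-solution properties extend to $t=T$, so $u$ solves the equation on $\left( 0,T\right] \times \mathbb{R}^{n}$.

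Finally, $u\in \mathrm{BC}\left( \left[ 0,T\right] \times \mathbb{R}^{n}\right) $ is a bounded viscosity solution with $\mathrm{BUC}\left( \mathbb{R}^{n}\right) $ initial data and $F$ obeys the enhanced structural hypothesis, so Corollary \ref{CorBUCts} gives $u\in \mathrm{BUC}\left( \left[ 0,T\right] \times \mathbb{R}^{n}\right) $; uniqueness is already recorded in Theorem \ref{ThmMain}. The heart of the proof is the barrier construction --- ordered sub- and supersolutions with prescribed trace $u_{0}$ --- which rests on ingredients already in hand, namely the quadratic upper bound for $u_{0}$ coming from its modulus of continuity and the boundedness of $F$ on bounded sets (exactly what makes $u_{0}\left( x\right) +\zeta +C_{\zeta }\left\vert y-x\right\vert ^{2}+K_{\zeta }t$ a supersolution once $K_{\zeta }$ is large); the only remaining point requiring attention is the standard semicontinuous-envelope bookkeeping in Perron's lemma, which we take from the literature.
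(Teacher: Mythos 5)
Your overall plan coincides with the paper's (barriers, Perron, then Theorem \ref{ThmMain} and Corollary \ref{CorBUCts}), but the barrier construction has a genuine gap. You assert that the enhanced structural hypothesis lets you choose $K_{\zeta}$ so large that $\overline{w}_{x,\zeta}(t,y)=u_{0}(x)+\zeta+C_{\zeta}\left\vert y-x\right\vert^{2}+K_{\zeta}t$ is a classical supersolution on all of $(0,T)\times\mathbb{R}^{n}$. But both the gradient $D\overline{w}_{x,\zeta}=2C_{\zeta}(y-x)$ and the value $\overline{w}_{x,\zeta}$ itself are unbounded as $\left\vert y\right\vert\to\infty$, so the hypothesis --- boundedness of $F$ on sets where $(u,p,X)$ remain bounded --- gives no control at spatial infinity. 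Concretely, $F(t,y,r,p,X)=\left\vert p\right\vert^{2}-r$ is proper, degenerate elliptic, satisfies (\ref{UG314_1}) trivially and the enhanced bounded-UC hypothesis, yet
\begin{equation*}
K-F\bigl(t,y,\overline{w}_{x,\zeta},2C_{\zeta}(y-x),2C_{\zeta}I\bigr)=K(1+t)-\left(4C_{\zeta}^{2}-C_{\zeta}\right)\left\vert y-x\right\vert^{2}+u_{0}(x)+\zeta,
\end{equation*}
which is eventually negative as $\left\vert y-x\right\vert\to\infty$ for every fixed $K$ once $C_{\zeta}>1/4$, so no admissible $K_{\zeta}$ exists. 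The proof of Corollary \ref{CorBUCts}, which you invoke as a template, applies the quadratic barrier only \emph{locally} on $B_{R}(x_{0})$, precisely to keep $(p,X)=(2C(y-x),2CI)$ in a bounded set; your construction needs the same inequality globally, and there the argument breaks down.

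The paper circumvents this by a two-stage construction. In Step~1 it assumes $u_{0}$ is Lipschitz with constant $L$ and uses the cone-like profile $\psi_{\varepsilon,z}(x)=u_{0}(z)-L\left(\left\vert x-z\right\vert^{2}+\varepsilon\right)^{1/2}$, whose gradient is \emph{globally} bounded by $L$ and whose Hessian is bounded by $LC_{\varepsilon}$; this is exactly the regime in which the enhanced boundedness of $F$ lets one absorb the nonlinearity into a constant $A_{\varepsilon}\leq 0$, and the Lipschitz bound $\psi_{\varepsilon,z}\leq u_{0}$ plays the role your quadratic bound was meant to play. In Step~4 a general $u_{0}\in\mathrm{BUC}(\mathbb{R}^{n})$ is approximated by Lipschitz data $u_{0}^{n}$ in sup norm; the contraction estimate $\left\vert u^{n}-u^{m}\right\vert_{\infty}\leq\left\vert u_{0}^{n}-u_{0}^{m}\right\vert_{\infty}$, which follows from comparison and properness, makes the corresponding solutions Cauchy, and one passes to the limit by stability. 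If you want a one-shot barrier for general $\mathrm{BUC}$ data you would need a profile that is paraboloid-like near $x$ but has bounded slope at infinity (e.g.\ a minimum of a paraboloid and a steep cone); the Lipschitz-then-approximate route avoids having to manage such a construction.
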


\begin{proof}
\underline{Step 1:} Assume $u_{0}$ is Lipschitz continuous with Lipschitz
constant $L$. Define for $z\in \mathbb{R}^{n},\varepsilon >0$ 
\begin{equation*}
\psi _{\varepsilon ,z}(x):=u_{0}(z)-L\left( |x-z|^{2}+\varepsilon \right)
^{1/2}.
\end{equation*}

We will show that there exists $A_{\varepsilon }\leq 0$ (non-positive, yet
to be chosen) such that 
\begin{equation*}
u_{\varepsilon ,z}(t,x):=A_{\varepsilon }t+\psi _{\varepsilon ,z}(x)
\end{equation*}%
is a (classical) subsolution of $\partial _{t}-F=0$. To this end we first
note that $Du_{\varepsilon ,z}=D\psi _{\varepsilon ,z}$ and $%
D^{2}u_{\varepsilon ,z}=D^{2}\psi _{\varepsilon ,z}$ are bounded by $%
LC_{\varepsilon }$ where $C$ is a constant dependent on $\varepsilon $. We
also note that (for any non-positive choice of $A_{\varepsilon }$) 
\begin{equation*}
u_{\varepsilon ,z}(t,x)\leq u_{\varepsilon ,z}(0,x)=\psi _{\varepsilon
,z}\left( x\right) \leq u_{0}(z)-L|x-z|\leq u_{0}(x),
\end{equation*}%
thanks to $L$-Lipschitzness of $u_{0}$. Since $F=F\left( t,x,u,p,X\right) $
is assumed to be proper, and thus in particular anti-monotone in $u$, we have%
\begin{eqnarray*}
&&\partial _{t}u_{\varepsilon ,z}-F\left( t,x,u_{\varepsilon
,z},Du_{\varepsilon ,z},D^{2}u_{\varepsilon ,z}\right) \\
&=&A_{\varepsilon }-F\left( t,x,u_{\varepsilon ,z},D\psi _{\varepsilon
,z},D^{2}\psi _{\varepsilon ,z}\right) \\
&\leq &A_{\varepsilon }-F\left( t,x,\left\vert u_{0}\right\vert _{\infty
},D\psi _{\varepsilon ,z},D^{2}\psi _{\varepsilon ,z}\right) .
\end{eqnarray*}%
Since $\left\vert u_{0}\right\vert _{\infty }<\infty $ and $\left\vert D\psi
_{\varepsilon ,z}\right\vert ,\left\vert D^{2}\psi _{\varepsilon
,z}\right\vert \leq LC_{\varepsilon }$ we can use the assumed boundedness of 
$F$ over sets where $u,p,X$ remain bounded. In particular, we can pick $%
A_{\varepsilon }$ negative, large enough, such that%
\begin{equation*}
\partial _{t}u_{\varepsilon ,z}-F\left( t,x,u_{\varepsilon
,z},Du_{\varepsilon ,z},D^{2}u_{\varepsilon ,z}\right) \leq \dots \leq 0.
\end{equation*}

We now define the $\sup $ of all these subsolutions, 
\begin{equation*}
\hat{u}(t,x):=\sup_{\varepsilon \in (0,1],z\in \mathbb{R}^{n}}u_{\varepsilon
,z}(t,x)\leq u_{0}\left( x\right) \leq \left\vert u_{0}\right\vert _{\infty
}<\infty ,
\end{equation*}%
and note that%
\begin{equation*}
\hat{u}(0,x)=\sup_{\varepsilon \in (0,1],z\in \mathbb{R}^{n}}\psi
_{\varepsilon ,z}\left( x\right) =\sup_{\varepsilon \in (0,1]}u_{0}\left(
x\right) -L\varepsilon ^{1/2}=u_{0}\left( x\right) .
\end{equation*}

Th upper semicontinuous envelope $\underline{u}(t,x):=\hat{u}^{\ast }$ is
then (cf. Proposition 8.2 in \cite{CPrimer} for instance) also a subsolution
to $\partial _{t}-F=0$.

\underline{Step 2:} We show that $\hat{u}(t,x)$ is continous at $t=0$; this
implies that%
\begin{equation*}
\underline{u}(0,x):=\hat{u}\left( 0,x\right) =u_{0}\left( x\right)
\end{equation*}%
and thus yields a sub-solution with the correct initial data. Let $%
(t^{n},x^{n})\rightarrow (0,x)$. First we show lower semicontinuity, i.e. 
\begin{equation*}
\liminf_{n\rightarrow \infty }\hat{u}(t^{n},x^{n})\geq \hat{u}(0,x).
\end{equation*}%
Let $\delta >0$. Choose $\tilde{\varepsilon},\tilde{z}$ such that 
\begin{equation*}
u_{\tilde{\varepsilon},\tilde{z}}(0,x)\geq \hat{u}(0,x)-\delta .
\end{equation*}%
Let $M$ be a bound for $|Du_{\tilde{\varepsilon},\tilde{z}}|$ (and hence for 
$|D\psi _{\tilde{\varepsilon},\tilde{z}}|$). Choose $N$ such that for $n\geq
N$ 
\begin{equation*}
|t^{n}|,|x^{n}-x|\leq \min \left\{ \frac{\delta }{A_{\tilde{\varepsilon}}},%
\frac{\delta }{M}\right\} .
\end{equation*}%
Then 
\begin{align*}
\hat{u}(t^{n},x^{n})& \geq u_{\tilde{\varepsilon},\tilde{z}}(t^{n},x^{n}) \\
& =u_{\tilde{\varepsilon},\tilde{z}}(t^{n},x^{n})-u_{\tilde{\varepsilon},%
\tilde{z}}(0,x)+u_{\tilde{\varepsilon},\tilde{z}}(0,x) \\
& =A_{\tilde{\varepsilon}}t^{n}+\psi _{\tilde{\varepsilon},\tilde{z}%
}(x^{n})-\psi _{\tilde{\varepsilon},\tilde{z}}(x) + u_{\tilde{\varepsilon},%
\tilde{z}}(0,x) \\
& \geq \hat{u}(0,x)-3\delta ,
\end{align*}%
which proves the lower semicontinuity.

For upper semicontinuity, notice that 
\begin{align*}
u_{\varepsilon ,z}(s,y)& =A_{\varepsilon }s+\psi _{\varepsilon ,z}(y) \\
& \leq A_{\varepsilon }s+u_{0}(y) \\
& \leq u_{0}(y),
\end{align*}%
where we have used that $A_{\varepsilon }\leq 0$ and that $\psi
_{\varepsilon ,z}(y)\leq u_{0}(y)$, as shown above. Hence, $\hat{u}(s,y)\leq
u_{0}(y)$, and then for $(t^{n},x^{n})\rightarrow (0,x)$, we have 
\begin{equation*}
\limsup_{n}\hat{u}(t^{n},x^{n})\leq \limsup_{n}u_{0}(x^{n})=u_{0}(x)=\hat{u}%
(0,x).
\end{equation*}%
Hence $\hat{u}$ is also upper semicontinuous at $(0,x)$ and hence continuous
at $(0,x)$.

\underline{Step 3:} Similarly, one constructs a super-solution with correct
(bounded, Lipschitz) initial data $u_{0}$.\ Perron's method then applies and
yields a bounded viscosity solution to $\partial _{t}-F=0$ with bounded,
Lipschitz initial data.

\underline{Step 4:} Let now $u_{0}\in \mathrm{BUC}(\mathbb{R}^{n})$ and $%
u_{0}^{n}$ be a sequence of bounded Lipschitz functions such that $%
|u_{0}^{n}-u_{0}|_{\infty }\rightarrow 0$. By the previous step there exists
a bounded solution $u^{n}$ to $\partial _{t}-F=0$ with initial data $%
u^{n}\left( 0,\cdot \right) =u_{0}^{n}$. (It is also unique by comparison.)
Since $F$ is proper $\left( \gamma \geq 0\right) $, the solutions form a
contraction in the sense%
\begin{equation*}
|u^{n}-u^{m}|_{\infty ;[0,T]\times \mathbb{R}^{n}}\leq
|u_{0}^{n}-u_{0}^{m}|_{\infty ;\mathbb{R}^{n}}
\end{equation*}%
(This follows immediately from comparison and properness.). Hence $u^{n}$ is
Cauchy in supremum norm and converges to a continuous bounded function $%
u:[0,T]\times \mathbb{R}^{n}\rightarrow \mathbb{R}$. By Lemma 6.1 in the
User's Guide we then have that $u$ is a bounded solution to $\partial
_{t}-F=0$ with $\mathrm{BUC}(\mathbb{R}^{n})$ initial data. By comparison,
it is the unique (bounded) solution with this initial data. At last,
corollary \ref{CorBUCts} shows that the solution is $\mathrm{BUC}$ in time
space.
\end{proof}

\section{Appendix1 :\ Recalls on parabolic jets}

If $u:\left( 0,T\right) \times \mathbb{R}^{n}\rightarrow \mathbb{R}$ its
parabolic semijet $\mathcal{P}^{2,+}u$ is defined by $\left( b,p,X\right)
\in \mathbb{R\times R}^{n}\mathbb{\times }\mathcal{S}^{n}$ lies in $\mathcal{%
P}^{2,+}u\left( s,z\right) $ if $\left( s,z\right) \in \left( 0,T\right)
\times \mathbb{R}^{n}$ and%
\begin{equation*}
u\left( t,x\right) \leq u\left( s,z\right) +b\left( t-s\right) +\left\langle
p,x-z\right\rangle +\frac{1}{2}\left\langle X\left( x-z\right)
,x-z\right\rangle +o\left( \left\vert t-s\right\vert +\left\vert
x-z\right\vert ^{2}\right)
\end{equation*}%
as $\left( 0,T\right) \times \mathbb{R}^{n}\ni \left( t,x\right) \rightarrow
\left( s,z\right) $. Consider now $u:Q\rightarrow \mathbb{R}$ where $%
Q=(0,T]\times \mathbb{R}^{n}$. The parabolic semijet relative to $Q$, write $%
\mathcal{P}_{Q}^{2,+}u$, as used in \cite{MR1119185} for instance, is
defined by $\left( b,p,X\right) \in \mathbb{R\times R}^{n}\mathbb{\times }%
\mathcal{S}^{n}$ lies in $\mathcal{P}_{Q}^{2,+}u\left( s,z\right) $ if $%
\left( s,z\right) \in \left( 0,T\right) \times \mathbb{R}^{n}$ and%
\begin{equation*}
u\left( t,x\right) \leq u\left( s,z\right) +b\left( t-s\right) +\left\langle
p,x-z\right\rangle +\frac{1}{2}\left\langle X\left( x-z\right)
,x-z\right\rangle +o\left( \left\vert t-s\right\vert +\left\vert
x-z\right\vert ^{2}\right)
\end{equation*}%
as $Q\ni \left( t,x\right) \rightarrow \left( s,z\right) $. Note that $%
\mathcal{P}_{Q}^{2,+}u\left( s,z\right) =\mathcal{P}^{2,+}u\left( s,z\right) 
$ for $\left( s,z\right) \in \left( 0,T\right) \times \mathbb{R}^{n}$. Note
also the special behaviour of the semijet at time $T$ in the sense that%
\begin{equation}
\left( b,p,X\right) \in \mathcal{P}_{Q}^{2,+}u\left( T,z\right) \implies
\forall b^{\prime }\leq b:\left( b^{\prime },p,X\right) \in \mathcal{P}%
_{Q}^{2,+}u\left( T,z\right) \text{.}  \label{SpecialPropPQ}
\end{equation}%
Closures of these jets are defined in the usual way; e.g.%
\begin{equation*}
\left( b,p,X\right) \in \mathcal{\bar{P}}_{Q}^{2,+}u\left( T,z\right)
\end{equation*}%
iff $\exists \left( t_{n},z_{n};b_{n},p_{n},X_{n}\right) \in Q\times \mathbb{%
R\times R}^{n}\mathbb{\times }\mathcal{S}^{n}:\left(
b_{n},p_{n},X_{n}\right) \in \mathcal{\bar{P}}_{Q}^{2,+}u\left(
t_{n},z_{n}\right) $ and%
\begin{equation*}
\left( t_{n},z_{n};u\left( t_{n},z_{n}\right) ;b_{n},p_{n},X_{n}\right)
\rightarrow \left( T,z;u\left( T,z\right) ;b,p,X\right) .
\end{equation*}

\section{Appendix 2: parabolic theorem of sums revisited}

\begin{theorem}[{\protect\cite[Thm 7]{MR1073054}}]
Let $u_{1},u_{2}\in \mathrm{USC}\left( (0,T)\times \mathbb{R}^{n}\right) $
and $w\in \mathrm{USC}\left( (0,T)\times \mathbb{R}^{2n}\right) $ be given by%
\begin{equation*}
w\left( t,x\right) =u_{1}\left( t,x_{1}\right) +u_{2}\left( t,x_{2}\right)
\end{equation*}%
Suppose that $s\in \left( 0,T\right) ,\,z=\left( z_{1},z_{2}\right) \in 
\mathbb{R}^{2n},\,b\in \mathbb{R},\,p=\left( p_{1},p_{2}\right) \in \mathbb{R%
}^{2n},A\in \mathcal{S}^{2n}$ with%
\begin{equation}
\left( b,p,A\right) \in \mathcal{P}^{2,+}w\left( s,z\right) .
\label{bpAinJet}
\end{equation}%
Assume moreover that there is an $r>0$ such that for every $M>0$ there is a $%
C$ such that for $i=1,2$%
\begin{eqnarray}
b_{i} &\leq &C\text{ whenever }\left( b_{i},q_{i},X_{i}\right) \in \mathcal{P%
}^{2,+}w\left( t,x_{i}\right) ,  \label{CI_Equ27} \\
\left\vert x_{i}-z_{i}\right\vert +\left\vert s-t\right\vert &<&r\text{ and }%
\left\vert u_{i}\left( t,x_{i}\right) \right\vert +\left\vert
q_{i}\right\vert +\left\Vert X_{i}\right\Vert \leq M.  \notag
\end{eqnarray}%
Then for each $\varepsilon >0$ there exists $\left( b_{i},\,X_{i}\right) \in 
\mathbb{R\times }\mathcal{S}^{n}$ such that%
\begin{equation*}
\left( b_{i},p_{i},X_{i}\right) \in \mathcal{\bar{P}}^{2,+}u\left(
s,z_{i}\right)
\end{equation*}%
and%
\begin{equation}
-\left( \frac{1}{\varepsilon }+\left\Vert A\right\Vert \right) I\leq \left( 
\begin{array}{cc}
X_{1} & 0 \\ 
0 & X_{2}%
\end{array}%
\right) \leq A+\varepsilon A^{2}\text{ and }b_{1}+b_{2}=b.  \label{CI_Equ28}
\end{equation}
\end{theorem}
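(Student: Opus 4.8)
\textbf{The plan} is to deduce the parabolic statement from the classical \emph{elliptic} theorem of sums (\cite[Theorem 3.2]{MR1118699UserGuide}) by decoupling the time variable; hypothesis (\ref{CI_Equ27}) is exactly the ingredient that will let us discard the spurious second-order information in $t$ that this procedure generates. \emph{Step 1 (test function and localization).} Since $\left( b,p,A\right) \in \mathcal{P}^{2,+}w\left( s,z\right) $, one produces a function $\psi $, defined near $\left( s,z\right) $, of class $C^{1}$ in $t$ and $C^{2}$ in $x=\left( x_{1},x_{2}\right) $, with $\partial _{t}\psi \left( s,z\right) =b$, $D_{x}\psi \left( s,z\right) =p$, $D_{x}^{2}\psi \left( s,z\right) =A$, and such that $w-\psi $ has a strict maximum over a closed cylinder $\mathcal{K}=\left[ s-\rho ,s+\rho \right] \times \overline{B}_{\rho }\left( z_{1}\right) \times \overline{B}_{\rho }\left( z_{2}\right) $ attained at $\left( s,z\right) $. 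Shrinking $\rho <r$ we may moreover assume that the quantitative bound in (\ref{CI_Equ27}) is available at every point of $\mathcal{K}$, with constant $C$ corresponding to an $M$ fixed below.

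\textbf{Step 2 (decouple time, apply the elliptic theorem of sums).} For $\delta >0$ consider on $\mathbb{R}^{1+n}\times \mathbb{R}^{1+n}$
\begin{equation*}
\Theta _{\delta }\left( t_{1},x_{1},t_{2},x_{2}\right) :=u_{1}\left( t_{1},x_{1}\right) +u_{2}\left( t_{2},x_{2}\right) -\Phi _{\delta },\qquad \Phi _{\delta }:=\psi \left( \frac{t_{1}+t_{2}}{2},x_{1},x_{2}\right) +\frac{1}{2\delta }\left\vert t_{1}-t_{2}\right\vert ^{2}.
\end{equation*}
By the strictness from Step 1, for $\delta $ small $\Theta _{\delta }$ has an interior maximum at a point $\left( t_{1}^{\delta },x_{1}^{\delta },t_{2}^{\delta },x_{2}^{\delta }\right) \rightarrow \left( s,z_{1},s,z_{2}\right) $ with $\delta ^{-1}\left\vert t_{1}^{\delta }-t_{2}^{\delta }\right\vert ^{2}\rightarrow 0$ and $u_{i}\left( t_{i}^{\delta },x_{i}^{\delta }\right) \rightarrow u_{i}\left( s,z_{i}\right) $ as $\delta \downarrow 0$ (the usual penalization argument). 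Applying the elliptic theorem of sums to $u_{1},u_{2}$ in the variables $\left( t_{i},x_{i}\right) \in \mathbb{R}^{1+n}$, we obtain, for each auxiliary $\eta >0$, matrices $N_{i}^{\delta }\in \mathcal{S}^{1+n}$, numbers $r_{i}^{\delta }$ and vectors $q_{i}^{\delta }$ belonging to the closed elliptic superjet of $u_{i}$ at $\left( t_{i}^{\delta },x_{i}^{\delta }\right) $, where $\left( r_{i}^{\delta },q_{i}^{\delta }\right) =D_{\left( t_{i},x_{i}\right) }\Phi _{\delta }$ at the maximum point (so $r_{1}^{\delta }+r_{2}^{\delta }=\partial _{t}\psi \rightarrow b$ and $q_{i}^{\delta }\rightarrow p_{i}$), such that
\begin{equation*}
-\left( \frac{1}{\eta }+\left\Vert D^{2}\Phi _{\delta }\right\Vert \right) I\leq \begin{pmatrix} N_{1}^{\delta } & 0 \\ 0 & N_{2}^{\delta } \end{pmatrix} \leq D^{2}\Phi _{\delta }+\eta \left( D^{2}\Phi _{\delta }\right) ^{2}.
\end{equation*}

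\textbf{Step 3 (extract the parabolic data and pass to the limit).} Write $N_{i}^{\delta }$ in block form with spatial $\left( x_{i},x_{i}\right) $-block $X_{i}^{\delta }\in \mathcal{S}^{n}$. Since the $\delta ^{-1}$-term of $D^{2}\Phi _{\delta }$ sits only in the $\left( t_{1},t_{2}\right) $-block, the $\left( x_{i},x_{i}\right) $-blocks of $D^{2}\Phi _{\delta }$ and of $\left( D^{2}\Phi _{\delta }\right) ^{2}$ are bounded uniformly in $\delta $, so the displayed inequality gives a $\delta $-uniform upper bound on $\mathrm{diag}\left( X_{1}^{\delta },X_{2}^{\delta }\right) $; moreover an elementary check (dropping the $tt$- and $tx$-blocks of $N_{i}^{\delta }$ costs only a term $o\left( \left\vert t-t_{i}^{\delta }\right\vert +\left\vert x-x_{i}^{\delta }\right\vert ^{2}\right) $) shows that $\left( r_{i}^{\delta },q_{i}^{\delta },X_{i}^{\delta }\right) \in \mathcal{\bar{P}}^{2,+}u_{i}\left( t_{i}^{\delta },x_{i}^{\delta }\right) $. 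Now hypothesis (\ref{CI_Equ27}) enters: $q_{i}^{\delta }$ and $X_{i}^{\delta }$ being bounded (fix $\eta $, and take $M$ to be the resulting bound) and $\left( t_{i}^{\delta },x_{i}^{\delta }\right) \in \mathcal{K}$, (\ref{CI_Equ27}) yields $r_{i}^{\delta }\leq C$ uniformly in $\delta $, which together with $r_{1}^{\delta }+r_{2}^{\delta }\rightarrow b$ confines each $r_{i}^{\delta }$ to a bounded set. A reworking of the elliptic estimate, using the rank-deficient structure of $D^{2}\Phi _{\delta }$ in the time slots rather than the crude $\left\Vert D^{2}\Phi _{\delta }\right\Vert $, then upgrades the lower bound on $X_{i}^{\delta }$ to one independent of $\delta $, of the form $-\left( \frac{1}{\varepsilon }+\left\Vert A\right\Vert \right) I$ after reparametrizing the auxiliary constants in terms of the prescribed $\varepsilon $. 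Passing to a subsequence $\delta \downarrow 0$: $r_{i}^{\delta }\rightarrow b_{i}$ with $b_{1}+b_{2}=b$, $q_{i}^{\delta }\rightarrow p_{i}$, $X_{i}^{\delta }\rightarrow X_{i}$ obeying (\ref{CI_Equ28}) (recall $D_{x}^{2}\psi \left( s,z\right) =A$), and $\left( t_{i}^{\delta },x_{i}^{\delta };u_{i}\left( t_{i}^{\delta },x_{i}^{\delta }\right) \right) \rightarrow \left( s,z_{i};u_{i}\left( s,z_{i}\right) \right) $; hence $\left( b_{i},p_{i},X_{i}\right) \in \mathcal{\bar{P}}^{2,+}u_{i}\left( s,z_{i}\right) $, as claimed.

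\textbf{The main obstacle} is the matrix bookkeeping of Step 3 --- the reason the parabolic theorem of sums is not a formal corollary of the elliptic one. Two things must be kept under control. First, a naive ``add one time coordinate'' produces $\left( 1+n\right) \times \left( 1+n\right) $ matrices whose $tt$- and $tx$-entries are completely unconstrained, because the parabolic superjet of $w$ only controls the remainder to order $o\left( \left\vert t-s\right\vert \right) $, not $o\left( \left\vert t-s\right\vert ^{2}\right) $; hypothesis (\ref{CI_Equ27}) is precisely what allows this second-order $t$-information to be thrown away while the single surviving slope $b_{i}$ stays two-sidedly bounded. Second, the quadratic time-penalty injects $\delta ^{-1}$ into the elliptic matrix inequality, so that its a priori lower bound degenerates as $\delta \downarrow 0$; one must re-derive a $\delta $-uniform lower bound on the \emph{spatial} blocks from the special structure of $D^{2}\Phi _{\delta }$. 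Carrying both through with fully quantitative constants --- so that the final inequality features the given finite $A$ and the prescribed $\varepsilon $ --- is the part that, as the introduction warns, is quite involved.
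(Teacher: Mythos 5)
A point of orientation first: the paper does not prove this theorem at all. It is quoted verbatim from Crandall--Ishii \cite{MR1073054} (their Theorem 7); the paper only recalls the reduction of Lemma 8 of \cite{MR1073054} because that reduced formulation is needed for the genuinely new result, Theorem \ref{ThmTOSrevisited} (validity of the parabolic theorem of sums at $s=T$), whose proof sidesteps the issues below by introducing barriers in time so that the known theorem of sums can be applied at interior times, followed by a careful two-parameter limit. So your attempt has to stand on its own, and as written it has a genuine gap exactly where the content of the theorem lies.

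Your Steps 1--2 (smooth test function extracted from $\mathcal{P}^{2,+}w(s,z)$, doubling of the time variable with the penalty $\frac{1}{2\delta}\left\vert t_{1}-t_{2}\right\vert ^{2}$, elliptic theorem of sums in the variables $(t_i,x_i)$) are the natural first moves, and your remark that discarding the $tt$- and $tx$-blocks costs only $o\left( \left\vert t-t_{i}^{\delta }\right\vert +\left\vert x-x_{i}^{\delta }\right\vert ^{2}\right) $ is correct (note, though, that $\psi$ must be chosen $C^{2}$ in $t$ as well, otherwise $D^{2}\Phi _{\delta }$ does not exist). The breakdown is in Step 3. The elliptic theorem of sums only yields the lower bound $-\left( \frac{1}{\eta }+\left\Vert D^{2}\Phi _{\delta }\right\Vert \right) I$, and $\left\Vert D^{2}\Phi _{\delta }\right\Vert \sim \delta ^{-1}$; the claimed ``reworking of the elliptic estimate, using the rank-deficient structure of $D^{2}\Phi _{\delta }$'' that is supposed to upgrade this to a $\delta$-uniform bound of the form $-\left( \frac{1}{\varepsilon }+\left\Vert A\right\Vert \right) I$ on the spatial blocks is asserted, not proved --- and it is precisely this step which prevents the parabolic statement from being a formal corollary of the elliptic one. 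Moreover, your use of hypothesis (\ref{CI_Equ27}) is circular: to invoke it with a $\delta$-independent constant $C$ you need an a priori $\delta$-uniform bound $M$ on $\left\Vert X_{i}^{\delta }\right\Vert $ (the one-sided upper matrix bound does not control the norm), i.e.\ you need the very estimate you postpone; without it there is no uniform upper bound on $r_{i}^{\delta }$, hence no compactness for the time-slopes (recall $r_{i}^{\delta }$ contains $\pm \delta ^{-1}\left( t_{1}^{\delta }-t_{2}^{\delta }\right) $, which the penalization only controls as $o\left( \delta ^{-1/2}\right) $), hence no subsequential limits giving $b_{1}+b_{2}=b$ and (\ref{CI_Equ28}). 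These two interlocking difficulties are the reason the actual proof in \cite{MR1073054} is substantially more involved than the time-doubling sketch; identifying them, as you do in your closing paragraph, is not the same as resolving them.
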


The proof of the above theorem is reduced (cf. Lemma 8 in \cite{MR1073054})
to the case $b=0,z=0,p=0$ and $v_{1}\left( s,0\right) =v_{2}\left(
s,0\right) =0$, where (in order to avoid confusion) we write $v_{i}$ instead
of $u_{i}$. Condition (\ref{bpAinJet}) translates than to%
\begin{equation}
v_{1}\left( t,x_{1}\right) +v_{2}\left( t,x_{2}\right) -\frac{1}{2}%
\left\langle Ax,x\right\rangle \leq 0\text{ for all }\left( t,x\right) \in
\left( 0,T\right) \times \mathbb{R}^{2n};  \label{CI_Equ29}
\end{equation}%
this also means that the left-hand-side as a function of $\left(
t,x_{1},x_{2}\right) $ has a global maximum at $\left( s,0,0\right) $. The
assertion of the (reduced) theorem is then the existence of $\left(
b_{i},\,X_{i}\right) \in \mathbb{R\times }\mathcal{S}^{n}$ such that $\left(
b_{i},0,X_{i}\right) \in \mathcal{\bar{P}}^{2,+}v_{i}\left( s,0\right) $ for 
$i=1,2$ and (\ref{CI_Equ28}) holds with $b=0$.

\begin{theorem}
\label{ThmTOSrevisited}Assume that $u_{i}$ has a finite extension to $%
(0,T]\times \mathbb{R}^{n},\,i=1,2$, via its semi-continuous envelopes, that
is,%
\begin{equation*}
u_{i}\left( T,x\right) =\limsup_{\substack{ \left( t,y\right) \in
(0,T)\times \mathbb{R}^{n}:  \\ t\uparrow T,y\rightarrow x}}u_{i}\left(
t,y\right) <\infty .
\end{equation*}%
Then the above theorem remains valid at $s=T$ if%
\begin{equation*}
\mathcal{P}^{2,+}w\left( s,z\right) \text{ and }\mathcal{\bar{P}}%
^{2,+}u\left( s,z_{i}\right)
\end{equation*}%
is replaced by%
\begin{equation*}
\mathcal{P}_{Q}^{2,+}w\left( T,z\right) \text{ and }\mathcal{\bar{P}}%
_{Q}^{2,+}u\left( T,z_{i}\right)
\end{equation*}%
and the final equality in (\ref{CI_Equ28}) is replaced by 
\begin{equation}
b_{1}+b_{2}\geq b\text{.}  \label{b1b2GTb}
\end{equation}
\end{theorem}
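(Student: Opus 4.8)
The plan is to deduce Theorem \ref{ThmTOSrevisited} from the classical parabolic theorem of sums recalled just above it, by extending the $u_i$ slightly past time $T$ so that $s=T$ becomes an \emph{interior} time, applying the classical result there, and then transferring the jets it produces back to $Q$.

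First I would invoke the reduction of \cite[Lemma 8]{MR1073054}. That reduction subtracts from each $u_i$ only an affine function of $t$ (to normalise $b$ to $0$) together with affine/quadratic functions of $x$ (to normalise $z$, $p$ and the values $u_i(s,z_i)$), so it goes through verbatim with $\mathcal{P}^{2,+}$ replaced by $\mathcal{P}^{2,+}_Q$; the only point to check is trivial, e.g.\ that subtracting $b(t-T)$ from $w$ turns $(b,p,A)\in\mathcal{P}^{2,+}_Q w(T,z)$ into $(0,p,A)\in\mathcal{P}^{2,+}_Q(\text{new }w)(T,z)$. Thus we may assume $b=0$, $z=0$, $p=0$, $v_i(T,0)=0$ (writing $v_i$ for the normalised functions, in $\mathrm{USC}\left((0,T]\times\mathbb{R}^n\right)$ with finite semicontinuous envelope at time $T$) and, as in (\ref{CI_Equ29}),
\begin{equation*}
v_1(t,x_1)+v_2(t,x_2)-\tfrac12\langle Ax,x\rangle\le 0\qquad\text{for all }(t,x)\in(0,T]\times\mathbb{R}^{2n},
\end{equation*}
with $(T,0,0)$ a maximum point.

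Next I would extend each $v_i$ to the open slab $(0,2T)\times\mathbb{R}^n$ by freezing it in time \emph{and tilting it downward}, $\tilde v_i(t,x):=v_i(t\wedge T,x)-(t-T)^+$. Since $v_i(T,\cdot)$ is an upper envelope, $\tilde v_i$ is upper semicontinuous, and the displayed inequality together with its restriction to the time-$T$ slice gives $\tilde v_1(t,x_1)+\tilde v_2(t,x_2)-\tfrac12\langle Ax,x\rangle\le 0$ on $(0,2T)\times\mathbb{R}^{2n}$ with $(T,0,0)$ a maximum point that is now interior and at which the inequality is \emph{strict} for $t>T$. One checks that the structural hypothesis (\ref{CI_Equ27}), now read with $\mathcal{P}^{2,+}_Q$, survives: for $t<T$ the parabolic superjets of $\tilde v_i$ are those of $v_i$; at $t=T$ a superjet of $\tilde v_i$ restricts to a $\mathcal{P}^{2,+}_Q$-superjet of $v_i$ (bounded time component by hypothesis) plus a one-sided bound from the $t>T$ side; and for $t>T$ the time-frozen, downward-tilted shape forces the time component of any parabolic superjet to equal the tilt slope $-1$. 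The classical parabolic theorem of sums then applies at the interior time $T$ and yields, for each $\varepsilon>0$, $b_i\in\mathbb{R}$ and $X_i\in\mathcal{S}^n$ with $(b_i,0,X_i)\in\mathcal{\bar P}^{2,+}\tilde v_i(T,0)$, the two-sided estimate (\ref{CI_Equ28}) with $A$, and $b_1+b_2=0$.

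The last and main step is the transfer back to $Q$ and to the original data, and this is where the real care is needed. Because $(T,0,0)$ is a strict maximum \emph{from the right in time}, the (small) perturbed maxima and the spatial sup-convolution points used in the proof of the classical theorem all sit at times $\le T$; hence the sequences that witness $(b_i,0,X_i)\in\mathcal{\bar P}^{2,+}\tilde v_i(T,0)$ may be taken inside $(0,T]\times\mathbb{R}^n$, where $\tilde v_i=v_i$, so that $(b_i,0,X_i)\in\mathcal{\bar P}^{2,+}_Q v_i(T,0)$ — no jet "leaks in" from the artificial region $t>T$, which is exactly why the downward tilt is built into the extension rather than a plain time-freeze. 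Undoing the Lemma~8 normalisation restores the affine-in-$t$ shift and turns $b_1+b_2=0$ into $b_1+b_2\ge b$; the inequality (rather than equality) arises because the time component of a superjet at the terminal time is only one-sided data, which is precisely the special property (\ref{SpecialPropPQ}) of $\mathcal{P}^{2,+}_Q$ at $t=T$.
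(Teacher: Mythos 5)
Your extension-and-tilt idea is attractive, but the decisive step --- transferring the output of the classical parabolic theorem of sums back to $Q$ --- is a genuine gap, not a detail. Used as a black box on $\tilde v_i$, the classical theorem only yields $(b_i,0,X_i)\in\mathcal{\bar{P}}^{2,+}\tilde v_i(T,0)$, i.e.\ the existence of \emph{some} witnessing sequences $(t_n,x_n;b_n,q_n,X_n)$ with $(b_n,q_n,X_n)\in\mathcal{P}^{2,+}\tilde v_i(t_n,x_n)$; nothing in its statement lets you prescribe where these points lie. If $t_n>T$, then on that side $\tilde v_i(t,x)=v_i(T,x)-(t-T)$, so such a jet is nothing but a purely spatial second-order jet of the terminal envelope $v_i(T,\cdot)$ together with the time slope $-1$; it carries no information about $v_i(t,\cdot)$ for $t<T$, and its limit need not lie in $\mathcal{\bar{P}}_{Q}^{2,+}v_i(T,0)$ (a spatial jet of the envelope at time $T$ does not extend to a parabolic $Q$-jet in general). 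Your justification --- that $(T,0,0)$ is a strict maximum from the right in time, hence ``the perturbed maxima and the spatial sup-convolution points used in the proof of the classical theorem all sit at times $\le T$'' --- is a claim about the \emph{inside} of the Crandall--Ishii proof, not a consequence of its statement, and strictness of a maximum does not prevent perturbed or sup-convoluted maxima from drifting into $\{t>T\}$; it only forces them to converge back to $(T,0)$ in the limit. Making this rigorous means reopening the proof of the parabolic theorem of sums and tracking the time location of every auxiliary point, which is essentially what the paper does by a different device: it doubles the time variable, penalizes with $\frac{m}{2}\left\vert t_1-t_2\right\vert^2$, inserts the explicit barrier $\sum_{i}\left(T-t^{i,n}\right)^2/\left(T-t_i\right)$ so that all auxiliary maxima satisfy $\hat t_i<T$ (where only interior jets and hypothesis (\ref{CI_Equ27}) are needed), applies the \emph{elliptic} theorem of sums there, and then carries out the delicate double limit $\frac1n\ll\frac1m\to 0$.

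Two further points. First, to apply the classical theorem to $\tilde v_i$ at the now-interior time $T$ you must verify (\ref{CI_Equ27}) for $\tilde v_i$ at all nearby points, including those with $t=T$; there a jet of $\tilde v_i$ restricts to an element of $\mathcal{P}_{Q}^{2,+}v_i(T,\cdot)$, and the hypothesis as stated (interior jets) gives no upper bound on its time component --- recall (\ref{SpecialPropPQ}) only permits \emph{decreasing} $b$ at time $T$, so no bound comes for free. (The paper's barrier construction never needs jet information at $t=T$, which is precisely one of its advantages; a possible repair of your route would combine a much steeper tilt with a terminal-time analogue of (\ref{CI_Equ27}), but that is an additional hypothesis and an additional argument, neither of which is in your text.) Second, your explanation of (\ref{b1b2GTb}) is off: undoing the Lemma~8 normalisation shifts the $b_i$ by constants summing to $b$, so your argument, had it worked, would deliver the equality $b_1+b_2=b$; in the paper the weakening to an inequality comes from the nonnegative time-derivative contributions of the penalty and barrier terms, and, as the paper's own remark emphasizes, (\ref{SpecialPropPQ}) cannot be invoked to adjust elements of the closure $\mathcal{\bar{P}}_{Q}^{2,+}$ at time $T$.
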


\begin{remark}
If we knew (but we don't!) that the final conclusion is $\left(
b_{i},p_{i},X_{i}\right) \in \mathcal{P}^{2,+}u\left( T,z_{i}\right) $,
rather than just being an element in the closure $\mathcal{\bar{P}}%
_{Q}^{2,+}u\left( T,z_{i}\right) $, then we could trivially diminuish the $%
b_{i}$'s such as to have $b_{1}+b_{2}=b$; cf. (\ref{SpecialPropPQ}).
\end{remark}

\begin{proof}
\textbf{Step 1:}\ We focus on the reduced setting (and thus write $v_{i}$
instead of $u_{i}$) and (following the proof of Lemma 8 in \cite{MR1073054})
redefine $v_{i}\left( t_{i},x_{i}\right) $ as $-\infty $ when $\left\vert
x_{i}\right\vert >1$ or $t_{i}\notin \left[ T/2,T\right] $. We can also
assume that (\ref{CI_Equ29}) is strict if $t<s=T$ or $x\neq 0$. For the rest
of the proof, we shall abreviate $\left( t_{1},t_{2}\right) ,\left(
x_{1},x_{2}\right) $ etc by $\left( t,x\right) $. With this notation in mind
we set%
\begin{equation*}
w\left( t,x\right) =v_{1}\left( t_{1},x_{1}\right) +v_{2}\left(
t_{2},x_{2}\right) -\frac{1}{2}\left\langle Ax,x\right\rangle .
\end{equation*}%
By the extension via semi-continuous envelopes, there exist a sequence $%
\left( t^{n},x^{n}\right) \in \left( 0,T\right) ^{2}\times \left( \mathbb{R}%
^{n}\right) ^{2}$, such that%
\begin{equation*}
\left( t^{n},x^{n}\right) \equiv \left(
t^{1,n},t^{2,n},x^{1,n},x^{2,n}\right) \rightarrow \left( T,T,0,0\right) .
\end{equation*}%
We now consider $w$ with a penality term for $t_{1}\neq t_{2}$ and a barrier
at time $T$ for both $t_{1}$ and $t_{2}$.%
\begin{equation*}
\psi _{m,n}\left( t,x\right) =w\left( t,x\right) -\left\{ \frac{m}{2}%
\left\vert t_{1}-t_{2}\right\vert ^{2}+\sum_{i=1}^{2}\left( T-t^{i,n}\right)
^{2}/\left( T-t_{i}\right) \right\} ,
\end{equation*}%
indexd by $\left( m,n\right) \in \mathbb{N}^{2}$, say. By assumption $w$ has
a maximum at $\left( T,T,0,0\right) $ which we may assume to be strict
(otherwise subtract suitable forth order terms ...). Define now%
\begin{equation*}
\left( \hat{t},\hat{x}\right) \in \arg \max \psi _{m,n}\text{ over }%
[T-r,T]^{2}\times \bar{B}_{r}\left( 0\right) ^{2}
\end{equation*}%
where $r=T/2$ (for instance). When we want to emphasize dependence on $m,n$
we write $\left( \hat{t}_{m,n},\hat{x}_{m,n}\right) $. We shall see below
(Step 2) that there exists increasing sequences $m=m\left( k\right)
,n=n\left( k\right) $ so that%
\begin{equation}
\left( \hat{t},\hat{x}\right) |_{m=m\left( k\right) ,n=n\left( k\right)
}\rightarrow \left( T,T,0,0\right) .  \label{thatxhatTendstoTT00}
\end{equation}%
Using the (elliptic) theorem of sums in the form of \cite[Theorem 1]%
{MR1073054} we find that there are%
\begin{equation*}
\left( b_{i},p_{i},X_{i}\right) \in \mathcal{\bar{P}}^{2,+}v_{i}\left( \hat{t%
}_{i},\hat{x}_{i}\right)
\end{equation*}%
(where $\hat{t}_{i}\rightarrow T,\hat{x}_{i}\rightarrow 0$ as $k\rightarrow
\infty $) such that the first part of (\ref{CI_Equ28}) holds and%
\begin{equation*}
A\left( 
\begin{array}{c}
\hat{x}_{1} \\ 
\hat{x}_{2}%
\end{array}%
\right) =\left( 
\begin{array}{c}
p_{1} \\ 
p_{2}%
\end{array}%
\right) ,\,\,b_{i}=m\left( t_{i}-t_{3-i}\right) +\left( T-t^{i,\varepsilon
}\right) ^{2}/\left( T-t_{i}\right) ^{2}.
\end{equation*}%
for $i=1,2$. Note that%
\begin{equation*}
b_{1}+b_{2}=m\left( t_{1}-t_{2}\right) +m\left( t_{2}-t_{1}\right) +\left( 
\text{positive terms}\right) \geq 0;
\end{equation*}%
since each $b_{i}$ is bounded above by the assumptions and the estimates on
the $X_{i}$ it follows that the $b_{i}$ lie in precompact sets. Upon passing
to the limit $k\rightarrow \infty $ we obtain points%
\begin{equation*}
\left( b_{i},p_{i},X_{i}\right) \in \mathcal{\bar{P}}^{2,+}v_{i}\left(
T,0\right) ,\,\,\,i=1,2;
\end{equation*}%
with $b_{1}+b_{2}\geq 0$.

\textbf{Step 2:}\ We still have to establish (\ref{thatxhatTendstoTT00}). We
first remark that for arbitrary (strictly) increasing sequences $m\left(
k\right) ,n\left( k\right) $, compactness implies that%
\begin{equation*}
\left\{ \left( \hat{t}_{m\left( k\right) ,n\left( k\right) },\hat{x}%
_{m\left( k\right) ,n\left( k\right) }\right) :k\geq 1\right\} \in \lbrack
T-r,T]^{2}\times \bar{B}_{r}\left( 0\right) ^{2}
\end{equation*}%
has limit points. Note also $\hat{t}_{1},\hat{t}_{2}\in \lbrack T-r,T)$
thanks to the barrier at time $T$. The key technical ingredient for the
remained of the argument is and we postpone details of these to Step 3 below:%
\begin{equation}
w\left( \hat{t},\hat{x}\right) -\psi _{m,n}\left( \hat{t},\hat{x}\right)
=\left\{ \frac{m}{2}\left\vert \hat{t}_{1}-\hat{t}_{2}\right\vert
^{2}+\sum_{i=1}^{2}\left( T-t^{i,n}\right) ^{2}/\left( T-\hat{t}_{i}\right)
\right\} \rightarrow 0\text{ as }\frac{1}{n}<<\frac{1}{m}\rightarrow 0.
\label{Step3Stmt}
\end{equation}%
In particular, for every $k>0$ there exists $m\left( k\right) $ such that
for all $m\geq m\left( k\right) $ 
\begin{equation*}
\lim \sup_{n\rightarrow \infty }\left\{ ...\right\} <\frac{1}{k}.
\end{equation*}%
By making $m\left( k\right) $ larger if necessary we may assume that $%
m\left( k\right) $ is (strictly)\ increasing in $k$. Furthermore there
exists $n\left( m\left( k\right) ,k\right) =n\left( k\right) $ such that for
all $n\geq n\left( k\right) :\left\{ ...\right\} <2/k$. Again, we may make $%
n\left( k\right) $ larger if necessary so that $n\left( k\right) $ is
strictly increasing. Recall $t^{1,n\left( k\right) }-t^{2,n\left( k\right)
}\rightarrow T-T=0$ as $k\rightarrow \infty $. For reasons that will become
apparent further below, we actually want the stronger statement that 
\begin{equation}
\frac{m\left( k\right) }{2}\left\vert t^{1,n\left( k\right) }-t^{2,n\left(
k\right) }\right\vert ^{2}\rightarrow 0\text{ as }k\rightarrow \infty
\label{nkFastEnough}
\end{equation}%
which we can achieve by modifying $n\left( k\right) $ such as to run to $%
\infty $ even faster. Note that the so-constructed $m=m\left( k\right)
,n=n\left( k\right) $ has the property 
\begin{equation}
\left[ w\left( \hat{t},\hat{x}\right) -\psi _{m,n}\left( \hat{t},\hat{x}%
\right) \right] |_{m=m\left( k\right) ,n=\left( k\right) }=\left\{
...\right\} |_{m=m\left( k\right) ,n=\left( k\right) }\rightarrow 0\text{ as 
}k\rightarrow \infty .  \label{wApproxEqualPsi}
\end{equation}%
By switching to a subsequence $\left( k_{l}\right) $ if necessary we may
also assume (after relabeling) that%
\begin{equation*}
\left( \hat{t}_{m\left( k\right) ,n\left( k\right) },\hat{x}_{m\left(
k\right) ,n\left( k\right) }\right) \rightarrow \left( \tilde{t},\tilde{x}%
\right) \in \lbrack T-r,T]^{2}\times \bar{B}_{r}\left( 0\right) ^{2}\text{
as }k\rightarrow \infty \text{.}
\end{equation*}%
In the sequel we think of $\left( \hat{t},\hat{x}\right) $ as this sequence
indexed by $k$. We have%
\begin{eqnarray}
w\left( \tilde{t},\tilde{x}\right) &\geq &\lim \sup_{k\rightarrow \infty
}w\left( \hat{t},\hat{x}\right) |_{m=m\left( k\right) ,n=\left( k\right) }%
\text{ \ \ by upper-semi-continuity }  \label{wttildextildeestimate} \\
&=&\lim \sup_{k\rightarrow \infty }\psi _{m,n}\left( \hat{t},\hat{x}\right)
|_{m=m\left( k\right) ,n=\left( k\right) }\text{ thanks to (\ref%
{wApproxEqualPsi}).}  \notag
\end{eqnarray}%
On the other hand, thanks to the particular form of our time-$T$ barrier,%
\begin{eqnarray*}
&&\psi _{m,n}\left( \hat{t},\hat{x}\right) \\
&\geq &\psi _{m,n}\left( t^{n},x^{n}\right) \\
&=&w\left( t^{n},x^{n}\right) -\left\{ \frac{m}{2}\left\vert
t^{1,n}-t^{2,n}\right\vert ^{2}+\sum_{i=1}^{2}\left( T-t^{i,n}\right)
\right\} .
\end{eqnarray*}%
Take now $m=m\left( k\right) ,n=n\left( k\right) $ as constructed above. Then%
\begin{eqnarray*}
&&\psi _{m,n}\left( \hat{t},\hat{x}\right) |_{m=m\left( k\right) ,n=\left(
k\right) } \\
&\geq &w\left( t^{n\left( k\right) },x^{n\left( k\right) }\right) \\
&&-\left\{ \frac{m\left( k\right) }{2}\left\vert t^{1,n\left( k\right)
}-t^{2,n\left( k\right) }\right\vert ^{2}+\sum_{i=1}^{2}\left(
T-t^{i,n\left( k\right) }\right) \right\}
\end{eqnarray*}%
The first term in the curly bracket goes to zero (with $k\rightarrow \infty $%
) thanks to (\ref{nkFastEnough}), the other term goes to zero since $%
t^{i,n}\rightarrow T$ with $n\rightarrow \infty $, and hence also along $%
n\left( k\right) $. On the other hand (recall $x^{i,n}\rightarrow 0$) 
\begin{equation*}
w\left( t^{n\left( k\right) },x^{n\left( k\right) }\right) \rightarrow
v_{1}\left( T,0\right) +v_{2}\left( T,0\right) -\frac{1}{2}\left\langle
A0,0\right\rangle =0\text{ as }k\rightarrow \infty .
\end{equation*}%
(In the reduced setting $v_{1}\left( T,0\right) =v_{2}\left( T,0\right) =0$%
.) It follows that%
\begin{equation*}
\lim \inf_{k\rightarrow \infty }\psi _{m,n}\left( \hat{t},\hat{x}\right)
|_{m=m\left( k\right) ,n=\left( k\right) }=0.
\end{equation*}%
Together with (\ref{wttildextildeestimate}) we see that $w\left( \tilde{t},%
\tilde{x}\right) \geq 0$. But $w\left( T,T,0,0\right) =0$ was a strict
maximum in $[T-r,T]^{2}\times \bar{B}_{r}\left( 0\right) ^{2}$ and so we
must have $\left( \tilde{t},\tilde{x}\right) =\left( T,T,0,0\right) $.

\textbf{Step 3: }Set%
\begin{equation*}
M\left( h\right) =\sup_{\substack{ \left( t,x\right) \in \lbrack
T-r,T)^{2}\times \bar{B}_{r}\left( 0\right) ^{2}  \\ \left\vert
t_{1}-t_{2}\right\vert <h}}w\left( t_{1},t_{2},x_{1},x_{2}\right) \text{ and 
}M^{\prime }=\lim_{h\rightarrow 0}M\left( h\right)
\end{equation*}%
It is enough to show 
\begin{equation}
\limsup_{\frac{1}{n}<<\frac{1}{m}\rightarrow 0}w\left( \hat{t},\hat{x}%
\right) \leq M^{\prime }\leq \liminf_{\frac{1}{n}<<\frac{1}{m}\rightarrow
0}\,\psi _{m,n}\left( \hat{t},\hat{x}\right) .  \label{LemmaAppLeftToDoTOS}
\end{equation}%
since the claimed 
\begin{equation*}
w\left( \hat{t},\hat{x}\right) -\psi _{m,n}\left( \hat{t},\hat{x}\right)
=\left\{ \frac{m}{2}\left\vert \hat{t}_{1}-\hat{t}_{2}\right\vert
^{2}+\sum_{i=1}^{2}\left( T-t^{i,n}\right) ^{2}/\left( T-\hat{t}_{i}\right)
\right\} \rightarrow 0\text{ as }\frac{1}{n}<<\frac{1}{m}\rightarrow 0.
\end{equation*}%
follows from 
\begin{eqnarray*}
\limsup_{\frac{1}{n}<<\frac{1}{m}\rightarrow 0}\left\{ ...\right\} &\leq
&\limsup_{\frac{1}{n}<<\frac{1}{m}\rightarrow 0}w\left( \hat{t},\hat{x}%
\right) -\liminf_{\frac{1}{n}<<\frac{1}{m}\rightarrow 0}\,\psi _{m,n}\left( 
\hat{t},\hat{x}\right) \\
&\leq &0\text{ (and hence }=0\text{).}
\end{eqnarray*}%
Note that $w\left( \hat{t},\hat{x}\right) $ is bounded on $[T-r,T]^{2}\times 
\bar{B}_{r}\left( 0\right) ^{2}$ so that%
\begin{equation*}
\left\vert \hat{t}_{1}-\hat{t}_{2}\right\vert ^{2}=O\left( 1/m\right)
\implies w\left( \hat{t},\hat{x}\right) \leq M\left( \text{const}/\sqrt{m}%
\right) .
\end{equation*}%
On the other hand, from the very definition of $M^{\prime }$ as $%
\lim_{h\rightarrow 0}M\left( h\right) $, there exists a family $\left(
t_{h},x_{h}\right) $ so that 
\begin{equation}
|t_{1,h}-t_{2,h}|\,\leq h\text{ and }w\left( t_{h},x_{h}\right) \rightarrow
M^{\prime }\text{ as }h\rightarrow 0  \label{eqBoundOnDistanceTOS}
\end{equation}%
For every $m,n$ we may take $\left( t_{h},x_{h}\right) $ as argument of $%
\psi _{m,n}$ (which itself has a maximum at $\hat{t},\hat{x}$); hence 
\begin{equation}
w(t_{h},x_{h})-\frac{m}{2}h^{2}-\sum_{i=1}^{2}\left( T-t^{i,n}\right)
^{2}/\left( T-t_{i,h}\right) \leq \psi _{m,n}\left( \hat{t},\hat{x}\right) .
\label{eqSubseqTOS}
\end{equation}%
Take now a sequence $n=n\left( h\right) $, fast enough increasing as $%
h\searrow $ such that $\left( T-t^{i,n}\right) ^{2}/\left( T-t_{i,h}\right)
\rightarrow 0$ with $h\rightarrow 0$. It follows that%
\begin{eqnarray*}
M^{\prime } &=&\lim_{h\rightarrow 0}w(t_{h},x_{h}) \\
&=&\liminf_{h\,\rightarrow 0}\left( w(t_{h},x_{h})-\frac{m}{2}%
h^{2}-\sum_{i=1}^{2}\left( T-t^{i,n\left( h\right) }\right) ^{2}/\left(
T-t_{i,h}\right) \right) \\
&\leq &\liminf_{h\,\rightarrow 0}\psi _{m,n\left( h\right) }\left( \hat{t},%
\hat{x}\right) =\liminf_{n\,\rightarrow \infty }\psi _{m,n}\left( \hat{t},%
\hat{x}\right) \text{ by monotonicity of }\sup \psi _{m,n}\text{ in }n.
\end{eqnarray*}%
(In the last equality we used that $t^{i,n}\uparrow T$; this shows that $%
\sup \psi _{m,n}$ is indeed monoton in $n$.) The proof is now finished.
\end{proof}

\end{document}